\newcommand{\I}{\mathrm{i}}
\newcommand{\D}{\mathrm {d}}
\newcommand{\beq}{\begin{equation}}
\newcommand{\eeq}{\end{equation}}
\DeclareMathOperator{\diverg}{div}
\DeclareMathOperator{\ran}{ran}
\DeclareMathOperator{\meas}{meas}
\newtheorem{lemma}{Lemma}[section]
\newtheorem{theorem}[lemma]{Theorem}
\newtheorem{corollary}[lemma]{Corollary}
\theoremstyle{definition}
\newtheorem{definition}[lemma]{Definition}
\theoremstyle{definition}
\newtheorem{remark}[lemma]{Remark}
\theoremstyle{definition}
\newtheorem{example}[lemma]{Example}
\newcommand{\N}{{\mathbb N }}
\newcommand{\R}{{\mathbb R}}
\newcommand{\C}{{\mathbb C}}
\newcommand{\s}{{\mathcal S}}
\def\si{{\sigma}}
\def\vsi{{\varsigma}}
\newcommand{\e}{{\varepsilon }}
\newcommand{\ie}{{\sl i.e.\/ }}
\newcommand{\cf}{{\sl cf.\/ }}
\newcommand{\eg}{{\sl e.g.\/}}
\def\d{{\partial}}
\def\({\left(}
\def\){\right)}
\def\<{\left\langle}
\def\>{\right\rangle}
\def\O{\mathcal O}
\def\res{{\rm res}}
\def\be{{\mathbf E}}
\def\B{\mathcal B}
\def\E{{\rm e}}
\def\i{{\rm i}}
\def\ol{\overline}
\def\YYY{\mathcal Y}
\newcommand{\Id}[1]{{\rm I\kern-2pt I_{#1}}}
\renewcommand{\hbar}{{\displaystyle\bar{\phantom{x}}\kern-6pt h}}
\numberwithin{equation}{section}
\renewcommand{\Im}{\mathop{\mathrm{Im}}}
\begin{document}


\title[Pulse interaction in a NLS]{Interaction of modulated pulses in the 
nonlinear Schr\"odinger equation with periodic potential} 
\author[J.~Giannoulis]{Johannes Giannoulis}
\author[A.~Mielke]{Alexander Mielke}
\author[C.~Sparber]{Christof Sparber}
\address[J.~Giannoulis]{Zentrum Mathematik, 
Technische Universit\"at M\"unchen\\ Boltz\-mann\-stra{\ss}e 3\\D-85747 Garching b.\ M\"unchen\\Germany}
\email{giannoulis@ma.tum.de}
\address[A.~Mielke]{Weierstra{\ss}-Institut f\"ur Angewandte Analysis und 
Stochastik\\ Mohrenstra{\ss}e 39\\ 10117 Berlin  
\and Institut f\"ur Mathematik, Humboldt-Uni\-ver\-si\-t\"at zu Berlin\\ 
Rudower Chaussee 25\\12489 Berlin\\Germany}
\email{mielke@wias-berlin.de}
\address[C.~Sparber]{Wolfgang Pauli Institute Vienna \& Faculty of Mathematics, Vienna University, Nordbergstra\ss e 15, A-1090 Vienna, Austria}
\email{christof.sparber@univie.ac.at}

\begin{abstract} We consider a cubic nonlinear Schr\"odinger equation with
  periodic potential. In a semiclassical scaling the nonlinear interaction of
  modulated pulses concentrated in one or several Bloch bands is studied. The
  notion of closed mode systems is introduced which allows for the
  rigorous derivation of a finite system of amplitude equations describing the
  macroscopic dynamics of these pulses.
\end{abstract}
\subjclass[2000]{81Q20, 34E13, 34E20, 35Q55}

\keywords{Nonlinear Schr\"odinger equation, Bloch eigenvalue problem, two scale asymptotics, modulation equations, 
four-wave interaction.}
 
\thanks{This work has been partially supported by the DFG Priority Program
  1095 {\it ``Analysis, Modeling and Simulation of Multiscale Problems''}
  under contract number Mi459/3-3. The third author has been supported by the
  APART research grant of the Austrian academy of sciences.}
\maketitle
\begin{center}
version: April 27, 2007
\end{center}


\section{Introduction and main result}

In this work we study the asymptotic behavior for $0<\e \ll 1$ of
the following nonlinear Schr\" odinger equation (NLS)
\begin{equation}
\label{nls}
\I \e \partial _t u^\e =  -\frac{\e^2}{2}\Delta u^\e +
V_{\Gamma}\left(\frac{x}{\e}\right)u^\e + 
\e \kappa \, |u^\e|^2 u^\e,\quad x\in \R^d, t\in \R, 
\end{equation}
governing the dynamics of a wave field $u^\e(t,\cdot)\in L^2(\R^d)$.  Here, $\kappa \in
\R$ and the potential $V_\Gamma= V_\Gamma(y)\in \R$ is assumed to be 
smooth and
\emph{periodic} with respect to some \emph{regular lattice} $\Gamma \simeq
\mathbb Z^d$, generated by a given basis $\{\zeta_1,\dots,\zeta_d\}$, $\zeta_l
\in \R^d$, \ie
\begin{equation}
\label{eq:Vper}
V_{\Gamma}(y + \gamma) = V_{\Gamma}(y), \quad \forall \, y \in \R^d,
\gamma \in \Gamma\equiv  \textstyle
\big\{\gamma=\sum_{l=1}^d \gamma_l
  \zeta_l\in \R^d:  \: \gamma_l\in \mathbb Z \big\}. 
\end{equation}
It is well known that if $\kappa < 0$ the solution of
\eqref{nls} in general does not exist for all times, \ie finite-time
\emph{blow-ups} 
may occur, \cf \cite{SuSu}.

The equation \eqref{nls} can be seen as a simplified model of the one
considered in \cite{CMS}.  There the main motivation was to study, from a
\emph{semiclassical} point of view, the dynamics of a Bose-Einstein condensate
in an optical lattice, described by $V_\Gamma$, \cf \cite{ChNi, DFK, KMPS}
for more details. To this end a rescaling of the appearing physical parameters
yields an equation similarly to \eqref{nls}, but with an additional
non-periodic confining potential, which we shall neglect in the following.
The parameter $\e \ll 1$ then describes the \emph{microscopic/macroscopic
  scale ratio}. The main assumption for the analysis presented in \cite{CMS}
has been that the initial data $u^\e(0,\cdot)$ is 
supposed to be of \emph{WKB
  type} and in particular it 
has to be concentrated in a single (isolated)
\emph{Bloch band} $E_\ell(k)\in \R$.  These energy bands describe the spectral
subspaces corresponding to the \emph{periodic Hamiltonian operator}
\begin{equation}
\label{ham}
H_{\rm per}^\e: =  - \frac{\e^2}{2} \Delta +
V_{\Gamma}\left(\frac{x}{\e}\right),
\end{equation}
\cf Section \ref{secbloch} below for more details. In the linear case similar
WKB approximations have been established earlier in \cite{BLP, GRT}, yielding
an approximate macroscopic description (\ie on time-- and length--scales of order one) 
of the highly oscillatory solution to
\eqref{nls}.  However, the question concerning a generalization of the results in
\cite{CMS}, in particular to the case of
multiple bands, has been open so far.  Here we 
will answer this question for initial data which correspond to a sum of 
modulated \emph{plane waves}.

In order to derive an approximate macroscopic description we shall proceed by
a \emph{two scale expansion method} similar to that in \cite{CMS}.
To this end a detailed understanding of the influence of the nonlinearity is
crucial. Indeed we will show that the 
solution to \eqref{nls} can be approximated 
(in a suitably scaled Sobolev space) via 
\begin{equation}\label{approxintro}
u^\e (t,x) \sim \sum_{m=1}^M a_m(t,x) \chi_{\ell_m} \left(\frac{x}{\e}; k_m \right) \E^{\I (k_m \cdot x - t E_{\ell_m}(k_m) ) /\e)} 
+ \O(\e),
\end{equation} 
for $M \in \N$, where the set $\{ (k_m, \ell_m)\: : \: m=1,...,M\} $ is
assumed to form a \emph{closed mode system}, see Definition \ref{defclosed},´
of sufficient high order $\Lambda$. As we shall see $\Lambda$ will depend on the spatial 
dimension $d$. The
amplitudes $a_m$ are the (local-in-time) solutions to the nonlinear system of 
amplitude equations 
\begin{equation}
\label{systemIntro}
\I \partial_t a_m +  \I\vartheta_m \cdot\nabla_x a_m
=  \sum^M_{ {p,q,r =1} : \atop {\Sigma(\mu_p, \mu_q, \mu_r) =\Sigma(\mu_m)} }
\kappa_{(p,q,r,m)} \, a_{p} \, \ol a_{q} \,  a_{r}
,\quad m=1,\ldots,M.
\end{equation} 
The above system describes a so-called \emph{four-wave interaction}, also known (most prominently in 
laser physics and nonlinear optics) as 
\emph{four-wave mixing}, \cf \cite{BMS, CMBSKP, HM}. 
By \eqref{approxintro} we allow for nonlinear interactions 
within the same band but also consider interactions of different bands. In
particular, energy or mass transfer between different bands is expected due to 
the presence of the nonlinearity on the right hand side of
\eqref{systemIntro}.
To our knowledge this phenomenon has never been studied rigorously in the
context of Schr\"odinger type equations.  

One should note that the concept of wave mixing is strongly linked to
plane waves as considered above.  Indeed, if one allows for more general
phases (which is possible in the case of a single pulse \cite{CMS}), a rigorous understanding, even in
much simpler cases, is lacking and in particular one can not expect the
nonlinear interaction to be maintained on macroscopic time-scales in general.
On the other hand we could, without any problems, allow for (smooth)
\emph{higher order nonlinearities} as considered in \cite{CMS}. This however would
result into a much more involved resonance-structure for the nonlinear interactions
and in order to keep our presentation simple we restrict
ourselves to the cubic case.
 
Before going into more details, let us briefly mention the following mathematically rigorous works
which, besides \cite{CMS}, are most closely related to ours: In \cite{Sp} the same
equation as \eqref{nls} is considered but in a slightly different scaling.
Similarly, a nonlinear Schr\"odinger type model is derived in \cite{BGTU} from
a semilinear wave equation with periodic coefficients and in \cite{GiMi} from
an underlying oscillator chain model. Concerning nonlinear wave interactions,
there exist several results (mostly three-wave mixing) in the context of
strictly hyperbolic systems, see, \eg, \cite{HMR, JMR, MR, Ra}, and in the
case of microscopically discrete dynamical systems, \cf \cite{Gia}, \cite{GHM} 
and the references given therein. 

The paper is organized as follows: In Section \ref{secmode} we 
introduce the basic notions needed in the following, \ie Bloch bands, mode
systems and the \emph{closure condition}. We also discuss several
illustrative examples of wave mixing. Then in Section \ref{formal} the formal derivation of
the approximate solution is given in detail. In Section \ref{just} the
obtained formal asymptotic description is rigorously established and 
our main theorem is stated for a closed mode system
of order $\Lambda=2N+1$, with $N>d/2$. In Section \ref{highres} 
it is shown that this condition can be relaxed though by introducing the 
concept of \emph{weak closure}. 
Finally, in Appendix A, we 
discuss in more detail the underlying Hamiltonian structure of the amplitude equations 
\eqref{systemIntro}.

\section{Mode systems and resonances} \label{secmode}

For our work it is essential to study the spectral properties of 
$$
H_{\rm per} = - \frac{1}{2} \Delta_y + V_\Gamma(y),
$$ since they 
will basically determine the fast degrees of freedom in our model. 

\subsection{Bloch's spectral problem} \label{secbloch}

In what follows we denote by $Y$ the centered \emph{fundamental domain} of the
lattice $\Gamma$, \ie
\begin{equation}\label{eq:Y}
Y:= \left\{\gamma \in \R^d: \ \gamma=\sum_{l=1}^d \gamma_l \zeta_l, 
\ \gamma_l\in \Big[-\frac{1}{2}, \, \frac{1}{2}\Big] \right\}.
\end{equation}
By $\YYY$ we denote the $d$-dimensional torus $\R^d_{\!/\Gamma}$, which
is obtained also from $Y$ by identifying opposite faces. Note that writing
$H^s(\YYY)$ then automatically includes periodicity conditions.
Moreover, $Y^*$ denotes the corresponding basic cell of the dual lattice
$\Gamma^*$. Equipped with periodic boundary conditions $\YYY^*$ is usually called the \emph{Brillouin zone}
and hence we shall denote it by $\mathcal B\equiv \YYY^*$.

Next, consider the so-called \emph{Bloch eigenvalue problem} \cite{Bl}, \ie
the following spectral problem on $\YYY$:  
\begin{equation}\label{bloch}
H_{\Gamma}(k) \chi_\ell (y; k) = \, E_\ell (k)\chi_\ell (y; k), \quad  \text{$k \in \mathcal B$, $\ell \in \N$},
\end{equation}
where $E_\ell(k)\in \R$ and $\chi_\ell(y)\equiv \chi_\ell(y;k)$ denote the $\ell$-th eigenvalue and eigenvector
of the \emph{shifted Hamiltonian operator}
\begin{equation*}
H_{\Gamma}(k):= \E^{-\I k \cdot y} H_{\rm per} \, \E^{\I k \cdot y} = \frac{1}{2} \, \left(-\I \nabla_y + k \right)^2+
V_{\Gamma}\left (y\right) .
\end{equation*}
Let us recall some well known facts for this eigenvalue problem, \cf
\cite{B, BLP, ReSi, Wi}. 
Since $V_\Gamma$ is smooth and periodic, we get that, 
for every fixed $k\in \mathcal B$, $H_\Gamma(k)$ is self-adjoint on
$L^2(\YYY)$ with domain
$H^2(\YYY)$ and compact resolvent.  
Hence the spectrum of $H_\Gamma(k)$ is given by
\begin{equation*}
\text{spec} (H_\Gamma(k))= \{
E_\ell(k)\ ;\ \ell \in \N\} \subset \R. 
\end{equation*}
One can order the eigenvalues $E_\ell(k)$ according to their
magnitude and multiplicity such that 
\[
E_1(k)\leq\ldots\leq E_\ell(k)\leq E_{\ell+1}(k)\leq \dots
\]
Moreover every $E_\ell(k)$ is periodic w.r.t. $\Gamma^*$ and it holds that
$E_\ell(k)=E_\ell(-k)$. The set $\{E_\ell (k); \, k \in \mathcal B\}$ is
called the $\ell$th \emph{energy band}.  The associated
eigenfunctions, the \emph{Bloch functions}, $\chi_\ell(y; k)$ form (for every
fixed $k\in\mathcal B$) an orthonormal basis in $L^2(\YYY)$. 
We choose the usual normalization such that
\begin{equation*}
\left <\chi_{\ell_1}(k), \chi_{{\ell_2}}( k) \right>_{L^2(\YYY)}\equiv 
\int_\YYY \overline{\chi_{\ell_1}}(y; k)\, \chi_{\ell_2} (y; k) \, \D y =
\delta_{\ell_1, \ell_2},\quad \ell_1 ,\, \ell_2 \in\N. 
\end{equation*} 
Concerning the dependence on $k\in \mathcal B$, it has been shown, 
\cf \cite{ReSi}, that for any $\ell\in \N$ 
there exists a closed subset $\mathcal U\subset \mathcal B$ such that $E_\ell(k)$ 
is analytic in $\mathcal O:=\mathcal B \backslash \mathcal U$. 
Similarly, the eigenfunctions 
$\chi_\ell$ are found to be analytic and periodic in $k$, 
for all $k \in \mathcal O$. Moreover it holds that 
\begin{equation*}
E_{\ell -1}(k) < E_\ell(k) < E_{\ell +1}(k),\quad \forall \, k \in \mathcal O.
\end{equation*}
If this condition is satisfied for all $k\in \mathcal B$ then $E_\ell(k)$ is said to be an \emph{isolated Bloch band}. 
Finally we remark that  
\[
\meas \mathcal U = \meas \, \{ k\in\mathcal B\ | \ E_{\ell_1}(k)=E_{\ell_2}(k), \ \ell_1 \not = \ell_2 \}=0.
\] 
In this set of measure zero one encounters so-called \emph{band crossings}. 
The elements of this set are characterized by the fact that $E_\ell(k)$ is only 
Lipschitz continuous and hence the \emph{group velocity} $\vartheta:=\nabla_k E_\ell(k)$ does \emph{not} exist.

\subsection{Resonances and closed mode systems} 

Our goal is to derive an approximate description of our model for $\e \ll 1$. To
this end we shall first introduce several definitions needed to do so.

\begin{definition}\label{def2.1} 
For $k \in \mathcal B$ and $\ell \in \N$ we call $\mu =(k,\ell)$ a {\em mode} 
and $\mathcal M:= \B\times\N$ the \emph{set of all modes}.\\
(i) The \emph{graph of all modes} is given by
\[
\mathcal G = \big \{(k, E_\ell (k)): (k,\ell) \in \mathcal M \big \}\subset \mathcal B
\times \R.  
\]
(ii) Given a finite set $\mathcal S=\{ \mu_1,\dots, \mu_M: M \in \N
\}\subset \mathcal M$ of modes, we call $\mathcal S^\Lambda$ the (ordered)
\emph{mode system of size $\Lambda \in \N$ generated by $\mathcal S$}.\\
(iii) We further introduce $\Sigma: \mathcal S^\Lambda  \rightarrow
\mathcal B \times \R$ by 
\begin{equation*}
  \Sigma(\mu_{m_1},\dots, \mu_{m_\Lambda}):=
\left (\sideset{}{^*}\sum_{\lambda =1}^{\Lambda} ({-}1)^{\lambda+1}
  k_{m_\lambda} \, , \, 
    \sum_{\lambda=1}^{\Lambda} ({-}1)^{\lambda+1} E_{\ell_{m_\lambda}} 
(k_{m_\lambda})\right),
\end{equation*}
where $\sum^*$ denotes summation modulo $\Gamma^*$, and we write
$\mathcal G_\mathcal S^{(\Lambda)} := \Sigma (\mathcal S^\Lambda)$ for the corresponding
graphs.
\end{definition} 

In the following the mapping $\Sigma$ will describe the possible nonlinear
interaction of modes (in every order of $\e$). Note that $\mathcal G_\mathcal S^{(1)} \subset \mathcal G$ and moreover,
for any $\Lambda \in \N$, it holds $\mathcal G_\mathcal S^{(\Lambda)} \subset
\mathcal G_\mathcal S^{(\Lambda+2)}$. Since we are dealing with a cubic 
nonlinearity, we shall see that indeed $\Lambda \in \N$ takes only \emph{odd} values. 

\begin{definition} \label{defclosed}
Given a finite set of modes $\mathcal S$ and a subset $\mathcal T$ of
$\mathcal M$.\\
(i) An element $(\mu_{m_1},\dots, \mu_{m_\Lambda}) \in \mathcal
S^\Lambda$ is called \emph{resonant of order $\Lambda$ to $\mathcal T$}, if
\[
\Sigma(\mu_{m_1},\dots, \mu_{m_\Lambda}) \in  \mathcal G^{(1)}_{\mathcal T}. 
\]
(ii) We say that $\mathcal S$ {\em is closed of order $\Lambda$}, if 
the group velocity $\nabla_k E_\ell(k)$ exists 
for all $\mu=(k,\ell)\in\mathcal S$ and
\begin{equation}\label{closed} 
\mathcal G_\mathcal S^{(\Lambda)} \cap \left(\mathcal G\setminus
    \mathcal G_\mathcal S^{(1)}\right) = \emptyset.
\end{equation}
\end{definition}
 
We infer from the above definition that a single mode $\mu$ is always resonant of order $\Lambda=1$ 
to itself. 
Note that for any $\Lambda \geq 2$ however, we obtain 
$$
\Sigma: (\mu_{m_1},\dots, \mu_{m_\Lambda}) \mapsto \sigma:=(k, E) 
\in \mathcal B\times \R,$$ 
which does not necessarily imply 
$\sigma\in \mathcal G^{(1)}_{\mathcal S} $. 
As for the condition \eqref{closed}, it is equivalent to saying that for all
$(\mu_{m_1},\dots, \mu_{m_\Lambda}) \in \s^\Lambda$ it holds: Either
$\Sigma(\mu_{m_1},\dots, \mu_{m_\Lambda}) \in (\mathcal B \times \R) \setminus
\mathcal G $, or else there exists a $\mu \in \mathcal S$ such that $\Sigma
(\mu_{m_1},\dots, \mu_{m_\Lambda}) = \Sigma (\mu)\in \mathcal G_\s^{(1)}$.  
The latter obviously means that $(\mu_{m_1},\dots, \mu_{m_\Lambda})$ 
is resonant of order $\Lambda$ to $\mathcal S$. We illustrate these concepts in the 
examples in Section \ref{examples}.

The condition that $\nabla_k E_\ell(k)$ has to exist implies that we can not deal with mode systems 
including band crossings, \ie which include a $\mu=(k,\ell)$ such that $k \in \mathcal U$. 
It is known that the higher the dimension $d$ and the higher the band index $\ell$, the more likely 
one encounters such crossings. Thus, in terms of practical use one can expect our analysis to be 
restricted to cases where only a few bands with low energies are taken into account.

\begin{remark} Also note that condition \eqref{closed} is not equivalent to
$\mathcal G_\mathcal S^{(\Lambda)} \cap \mathcal G \subset \mathcal G_\mathcal S^{(1)}$, since due to
multiple eigenvalues $E_\ell (k)$, we may have modes $\mu \in \mathcal S$ and
$\tilde \mu \not \in \mathcal S$ with $\Sigma (\mu) =\Sigma (\tilde \mu)$.
\end{remark}

\subsection{The system of amplitude equations} \label{structure}

In Section \ref{formal} we shall  derive a system of amplitude equations 
describing the macroscopic dynamics of our nonlinear wave interactions. Before going into the details of the derivation 
let us first discuss the general structure of the obtained system (see also Appendix A). 
In order to keep our presentation simple, we shall from now on 
consider only the case of \emph{non-degenerated eigenvalues} $E_\ell(k)$.

Let $\mathcal S$ be a given finite set of modes $\{\mu_1,\dots , \mu_M\}$
which is closed of sufficient high order $\Lambda$.
Then the equations obtained for the modulating amplitudes 
$a_m(t,x)\in \C$, for $m \in \{1,\dots, M \}$, are as in \eqref{systemIntro}, \ie 
\begin{equation*} 
\I \partial_t a_m +  \I\vartheta_m \cdot\nabla_x a_m
=  \sum^M_{ {p,q,r =1} : \atop {\Sigma(\mu_p, \mu_q, \mu_r) =\Sigma(\mu_m)} } \kappa_{(p,q,r,m)} \, a_{p} \, \ol a_{q} \,  a_{r}.
\end{equation*}
Here $\vartheta_m \in \R^d$ is the group velocity corresponding to a 
given mode $\mu_m =(k_m,\ell_m) \in \s$, \ie $\vartheta_m:= \nabla_k E_{\ell_m}(k_m)$, and we further denote by
\begin{equation*}
\kappa_{(p,q,r,m)}:= \kappa
\int_{\mathcal Y} \chi_{\ell_{p}}(y) \overline \chi_{\ell_q}(y)
\chi_{\ell_r}(y) \overline \chi_{\ell_m}(y) \, \D y   
\end{equation*}
the \emph{effective coupling constant} $\kappa_{(p,q,r,m)}$, which in general is complex valued.  

As in \eqref{nls}, 
the nonlinearity in the amplitude equations is again cubic. It 
takes into account the sum over all $p,q,r= 1,\dots, M$, for 
which the \emph{resonance condition of order $\Lambda=3$} holds. 
Explicitly, this is equivalent to the following two conditions
\begin{equation} \label{rcon}
k_p - k_q + k_r = \,  k_m, \quad 
E_{\ell_p}(k_p) - E_{\ell_q}(k_q) + E_{\ell_r}(k_r) = \, E_{\ell_m}(k_m),
\end{equation}
where the first equation has to be 
understood as a summation modulo $\Gamma^*$. 
The equations \eqref{rcon} describe a so-called \emph{four-wave interaction},
\ie three modes $\mu_p, \mu_q, \mu_r$ being in resonance with a fourth one
$\mu_m \in \s$.  Clearly, the nonlinear structure on the right-hand side of
\eqref{systemIntro} is such that energy transfer between different modes can occur.
In other words, even if initially some of the amplitudes $a_m(0,\cdot)$ are zero, 
they will not remain so in general for times $|t|\not =0$ (see also Subsection \ref{4wave}
below).

Concerning existence and uniqueness of solutions to the above given amplitude equations, 
we only need a local-in-time result. 
\begin{lemma} \label{lem1}
For any initial data $(a_1(0,\cdot),\ldots,a_M(0,\cdot)) \in H^S(\R^d)^M$, 
with $S > d/2$, 
the system \eqref{systemIntro} admits a unique solution 
$$
(a_1,\ldots,a_M) \in C^0([0,T); H^S(\R^d))^M \cap C^1((0,T); H^{S-1}(\R^d))^M,$$ 
up to some (finite) time $T >0$.
\end{lemma}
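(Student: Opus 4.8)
The plan is to treat the system \eqref{systemIntro} as a semilinear evolution equation in the Banach space $X_S := H^S(\R^d)^M$, $S > d/2$, and to apply the standard contraction-mapping / energy-method scheme for symmetric hyperbolic systems with a smooth (here, polynomial) nonlinearity. The left-hand side is the constant-coefficient transport operator $\partial_t + \vartheta_m\cdot\nabla_x$ acting componentwise, which generates a $C^0$-group of isometries on every $H^s(\R^d)$, namely $(\mathcal T_m(t)a)(x) = a(x - t\vartheta_m)$; collect these into a group $\mathcal T(t)$ on $X_S$. Writing $F(a) = (F_m(a))_{m=1}^M$ for the cubic right-hand side, with $F_m(a) = \sum_{p,q,r} \kappa_{(p,q,r,m)}\, a_p\,\ol a_q\,a_r$ restricted to resonant triples, the equation becomes the integral (Duhamel) equation $a(t) = \mathcal T(t)a(0) - \I\int_0^t \mathcal T(t-s)F(a(s))\,\D s$, and a fixed point of the associated map on $C^0([0,T];X_S)$ gives the desired mild solution.

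The key steps, in order: (1) Recall that $H^S(\R^d)$ with $S>d/2$ is a Banach algebra (Moser/Sobolev product estimate), hence $\|F_m(a)\|_{H^S}\le C\sum_{p,q,r}|\kappa_{(p,q,r,m)}|\,\|a_p\|_{H^S}\|a_q\|_{H^S}\|a_r\|_{H^S}$, so $F:X_S\to X_S$ is well-defined and, by the same product estimate applied to the difference $a_p\ol a_q a_r - b_p\ol b_q b_r$ written telescopically, locally Lipschitz: $\|F(a)-F(b)\|_{X_S}\le C(R)\,\|a-b\|_{X_S}$ on the ball $\{\|a\|_{X_S},\|b\|_{X_S}\le R\}$. (2) Fix $R = 2\|a(0,\cdot)\|_{X_S}$ and choose $T>0$ small enough that the Duhamel map is a contraction of the closed ball of radius $R$ in $C^0([0,T];X_S)$ into itself (using that $\mathcal T$ is an isometry, so the only smallness needed is $T\,C(R) < 1$ together with $T\,\sup_{\|a\|\le R}\|F(a)\| \le R/2$); Banach's fixed point theorem yields a unique $a\in C^0([0,T);X_S)$ solving the integral equation, with $T$ depending only on $\|a(0,\cdot)\|_{X_S}$. (3) Upgrade to a classical solution: since $a\in C^0([0,T);X_S)$ and $F(a)\in C^0([0,T);X_S)\subset C^0([0,T);H^{S-1})$, Duhamel's formula shows $t\mapsto a(t)$ is $C^1$ into $H^{S-1}$ with $\partial_t a = -\I F(a) - (\vartheta_m\cdot\nabla_x a)_m \in C^0((0,T);H^{S-1})$, so $a\in C^1((0,T);H^{S-1}(\R^d))^M$ and the PDE holds in the strong $H^{S-1}$ sense. (4) Uniqueness on the whole existence interval follows from a Gronwall argument using the local Lipschitz bound on any compact subinterval.

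I do not expect a genuine obstacle here: the result is the textbook local well-posedness statement for a Hamiltonian system of transport equations coupled through a smooth cubic nonlinearity, and the condition $S>d/2$ is exactly what makes $H^S$ an algebra. The only point requiring a little care is the telescoping estimate for the cubic difference in step (1) — but this is entirely routine given the algebra property — and, if one wants persistence of the full $H^S$ regularity (rather than a possible loss), confirming that the transport semigroup commutes with the Bessel potential $(1-\Delta)^{S/2}$, which it does since it is a Fourier multiplier; no energy-type commutator estimates are needed because the transport coefficients $\vartheta_m$ are constants.
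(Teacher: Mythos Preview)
Your proposal is correct and follows exactly the approach the paper takes: the paper's proof simply observes that the linear part generates translations by constant velocities (hence isometries on each $H^s$) and then invokes ``a standard fixed point argument,'' which is precisely the Duhamel/contraction scheme you have spelled out in detail using the algebra property of $H^S$ for $S>d/2$. Your write-up is a faithful and complete expansion of the paper's one-sentence sketch.
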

\begin{proof} 
Since the left hand side of \eqref{systemIntro} only generates 
translations by a constant velocity $\vartheta_m \in \R^d $ 
and thus conserves the $L^2(\R^d)$ norm of each $a_m (t,x)$, 
the assertion of the lemma follows by a standard fixed point argument.
\end{proof}

\begin{remark} Discarding for a moment the nonlinear term in \eqref{systemIntro}
  we want to point out that the remaining linear transport part is much simpler
  than in the case of the full WKB type approximation, as discussed in \cite{CMS}
  (for a single mode only). 
In particular we do not run into any problems
  due to \emph{caustics}, since the phase functions 
  $$
  \varphi_m(t,x)=k_m \cdot x - t E_{\ell_m}(k_m)
  $$
  are globally defined. Also note that in the present work the so-called \emph{Berry term} vanishes,  
  since we do not take into account additional non-periodic potentials, \cf \cite{CMS, PST}. 
\end{remark}

\subsection{Examples}\label{examples} In order to illustrate the abstract concepts defined in the former subsections we 
shall in the following consider several particular examples of nonlinear wave
interactions appearing in our model.

\subsubsection{Four-wave interaction for three pulses} \label{4wave} 

In this example we shall restrict ourselves to a set of modes 
$\{\mu_m=(k_m,\ell_m)\,:\, m=1,2,3\}$ which is closed of order $\Lambda=3$, at least, 
and which satisfies the following resonance conditions 
\begin{equation}\label{rconex}
k_1-k_2+k_1 = k_3, \quad E_{\ell_1}(k_1) - E_{\ell_2}(k_2) + E_{\ell_1}(k_1) =
E_{\ell_3}(k_3). 
\end{equation} 
(Again the first summation has to be understood modulo $\Gamma^*$.) 
This yields the following set of amplitude equations 
\begin{equation}\label{amsys}
\left \{
\begin{aligned}
  \I \partial_t a_{1} + \I \vartheta_1 \cdot\nabla_x a_1
  = & \,     W_1(a)a_1 +  2  \overline \kappa_{(1,2,1,3)} 
       \, \overline a_1 \, a_2\,  a_3,\\
  \I \partial_t a_2 + \I \vartheta_2 \cdot\nabla_x a_2
  = & \,    W_2(a)a_2 +  \kappa_{(1,2,1,3)} \,  a^2_1 \, \overline a_3,\\
  \I \partial_t a_3 + \I \vartheta_3 \cdot\nabla_x a_3 = & \,
  W_3(a)a_3 + \kappa_{(1,2,1,3)} \, a^2_1 \, \overline a_2,
\end{aligned}
\right.
\end{equation}
where we shortly denote 
\begin{equation*} 
W_m(a):= \kappa_{(m,m,m,m)}
|a_m|^2 + 2 \sum_{j=1,2,3: \atop j \not =m} \kappa_{(m,j,j,m)} |a_j|^2\ \in
\R.  
\end{equation*} 
We consequently expect
the solution of \eqref{nls} to be asymptotically described by 
\begin{equation*} u^\e(t,x)
\sim \sum _{m=1}^3 a_m(t,x) \chi_{\ell_m} \left(\frac{x}{\e}; k_m \right)
\E^{\I (k_m \cdot x - E_{\ell_m} (k_m)t)/\e} + \O(\e), \quad \ell_m \in \N.
\end{equation*} 
It is easily seen that the nonlinearities $W_m(a)a_m$  
can not transfer energy from one band to the other
as they are homogeneous in the respective $a_m(t,x)$ and that all of the appearing $\kappa_{(m,j,j,m)}$,
$m,j=1,2,3$ are indeed \emph{real valued}.  
What is more important though is the fact that the above given amplitude
system \eqref{amsys} includes an \emph{invariant sub-system}.  Namely, if
initially $a_1(0,\cdot)=0$, it remains so for all $t\in \R$ and thus the above
given system simplifies to
\begin{equation*}
\left \{
\begin{aligned}
\I \partial_t a_2 + \I \vartheta_2 \cdot\nabla_x a_2
= & \,    \left(\kappa_{(2,2,2,2)}|a_2|^2+ 2 \kappa_{(2,3,3,2)}|a_3|^2\right) a_2, \\
\I \partial_t a_3 + \I \vartheta_3 \cdot\nabla_x a_3
= & \,  \left(\kappa_{(3,3,3,3)}|a_3|^2+ 2 \kappa_{(2,3,3,2)}|a_2|^2\right) a_3 .
\end{aligned}
\right.
\end{equation*}
However, such a decoupling does not exist if initially $a_{2}(0,\cdot)=0$, 
since this amplitude will be generated during the course of time by the 
remaining two others. Analogously $a_{3}$ is generated by interaction 
of $a_1$ and $a_2$.

More formally we can also infer this fact from the closure condition, since any
possible combination of $k_2$ and $k_3$ via the mapping $\Sigma$ (with
$\Lambda=3$) yields either $k_2, k_3$, or any other value (like
$2k_2-k_3$) which is not in $\mathcal G_\s^{(1)}$ by assumption (recall that the system
of modes $\mu_1,\mu_2,\mu_3$ is assumed to be closed of order $\Lambda=3$).
However, if one aims to follow the same argument for, \eg, $k_1$ and $k_2$,
the first equation in \eqref{rconex} shows that this sub-system is not closed.

\subsubsection{The case of several pulses within one Bloch band} 

This is a particular situation where we keep the band index $\ell \in \N$
fixed and thus only consider the interaction of several pulses within a single
Bloch band.  Hence the solution to \eqref{nls} takes the asymptotic form
\begin{equation*}
  u^\e(t,x) \sim \sum _{m=1}^M a_m(t,x)  \chi_\ell \left(\frac{x}{\e}; k_m \right) \E^{\I (k_m \cdot x - E_\ell (k_m)t)/\e} 
  + \O(\e) , \quad \ell \in \N.
\end{equation*}
In particular we can expect this description to be correct in cases where
the Bloch band $E_\ell(k)$ admits only a moderate variation $\Delta_\ell :=
\text{var}_{k\in \mathcal B} \ E_\ell(k)$ and is well separated from the rest
of the spectrum of $H_\Gamma(k)$ by a sufficiently large gap, \ie
\[
\min\{ |E_\ell(k)-E_n(k)| \::\: n\in \N,\ n \neq \ell \}
\gg \Delta_\ell >0.
\] 
It is easy to show then that a four-wave interaction can \emph{always} 
be realized within such a band. Choose $k_\text{max}$
and $k_\text{min}$ such that $E_\ell(k_\text{min} ) \leq E_\ell(k) \leq
E_\ell(k_\text{max})$ for all $k \in \mathcal B$. We are looking for a triple
$(k_1,k_2,k_3)$ satisfying \eqref{rconex} with $\ell_j=\ell$, $j=1,2,3$. 
To this end we note that $k_3=2k_1-k_2$, modulo $\Gamma^*$, and define
\[
e(k_1,k_2)\equiv 2E_\ell(k_1)-E_\ell(k_2)-E_\ell(2k_1{-} k_2).
\] 
Then we have 
$ e(k_\text{max},k_\text{min})>0>e(k_\text{min},k_\text{max})$ and by a simple
application of the intermediate-value theorem, we easily find $k_1,k_2$ with
$k_1\neq k_2$ and $e(k_1,k_2)=0$. 

\subsubsection{The case of a single pulse decomposed into several bands} 

This is a second particular case, where we expect an asymptotic description
for solutions to \eqref{nls} given by
\begin{equation*} 
  u^\e(t,x) \sim \sum
  _{\ell=1}^L a_\ell(t,x) \chi_\ell \left(\frac{x}{\e}; k_0 \right) \E^{\I
    (k_0 \cdot x - E_\ell (k_0)t)/\e} + \O(\e) ,  
\quad k_0 \in \R, 
\end{equation*} 
where $k_0$ is some given wave vector. One should have the following
intuition: Given an initial plane wave of the form
\begin{equation*} 
u_{\rm in}^\e (x) = f(x)
  \E^{ \I k_0 \cdot x/ \e}, 
\end{equation*} 
one decomposes the slowly varying
\emph{macroscopic} amplitude $f(x)$ into a sum of terms, each of which is
concentrated on a single Bloch band. This is possible due to Bloch's theorem,
which ensures that $L^2(\R^d)= \bigoplus_{\ell =1}^\infty \mathcal H_\ell$,
where $\mathcal H_\ell$ are the so-called band spaces. Strictly speaking though, one would require 
countably many terms in the decomposition, which we can take into account here. 
However for any practical purposes (and if $f(x)$ is sufficiently smooth and rapidly decaying) 
only the first few Bloch bands need to be taken into account as all higher bands 
give negligible contributions, \cf \cite{HJMS}. 
We shall not go into more details here on the
precise definition of $\mathcal H_\ell$ etc. but refer to \cite{B, BLP, ReSi, Wi} 
for more details on these classical results (see also \cite{HJMS} for a
numerical approach).

Concerning the possible generation of resonant modes, it is clear that in this case 
the first condition in \eqref{rcon} is 
trivially fulfilled, since we are only dealing with a single wave vector $k_0$. Thus, it is merely a question 
on the precise structure of the bands $E_\ell(k_0)$ 
whether one can indeed expect resonances.

\section{Formal derivation of the approximate solution}
\label{formal} 

In the following we consider a finite set $\mathcal S$ of modes 
which is closed of order $\Lambda = 2 N +1$, for some $N \in \N$, 
to be determined later. 

For solutions of \eqref{nls} we seek an asymptotic two-scale expansion of
the following form  
\begin{equation}
\label{wkb1}
u_N^\e (t,x):= 
\ \sum_{n=0}^N \e^n \, v_n
\left(t,x,\frac{t}{\e},\frac{x}{\e}\right), 
\end{equation}
where 
\begin{equation}
\label{wkb2}
 v_n(t,x,\tau,y):=
\sum_{\sigma \in \mathcal G_\mathcal S^{(2n+1)}} A_{n, \sigma}(t,x,y)\be_{\sigma}\left(\tau,y\right),
\end{equation}
with $\be_{\sigma }(\tau,y):=\E^{\i \sigma \cdot (y, -\tau )}$, for 
$\sigma \in \mathcal G_\mathcal S^{(2n+1)} \equiv \Sigma (\mathcal S^{2n+1})$. 
Note that in the most simple case, where $n=0$, we get that $\sigma = (k, E_{\ell} (k))$, with $(k,\ell)\in \mathcal S$, which yields 
$\be_{\sigma }(\tau,y)= \E^{ \I (k\cdot y - E_\ell(k) \tau)}$, a simple plane wave. Moreover, if $\sigma_j \in \mathcal G_\s^{(2n_j+1)}$, 
for $j=1,2,3$, this yields
\begin{equation} \label{relation}
\be_{\sigma_1} \overline{\be}_{\sigma_2} \be_{\sigma_3} = \be_{\sigma_1-\sigma_2+\sigma_3 }, 
\end{equation}
with $\sigma_1-\sigma_2+\sigma_3 \in \mathcal G_\s^{(2(n_1+n_2+n_3)+3)}$. Of
course we also impose that
\begin{equation*}
A_{n, \sigma}(\cdot,\cdot,y + \gamma) = A_{n, \sigma}(\cdot,\cdot,y), \quad \forall \, y
\in \R^d,  
\, \gamma \in \Gamma.
\end{equation*}

\subsection{The general strategy}\label{ss3.1}

As already said before, we shall only consider the case of simple eigenvalues
$E_\ell(k)$, for simplicity.  Plugging the ansatz \eqref{wkb1}, \eqref{wkb2}
into \eqref{nls} and expanding in powers of $\e$, we formally obtain
\begin{equation*}
\I \e \partial _t u_N^\e -  H_{\rm per}^\e   u_N^\e-  \e \kappa |u_N^\e|^2 u_N^\e  = 
\sum_{n=0}^N \e^n X_n +\res(u_N^\e),
\end{equation*}
with the residual 
\begin{equation}\label{res}
\res(u_N^\e) =\sum_{n =N+1}^{3N+1}\e^n X_n.
\end{equation}
Introducing, for any (general) $\sigma \equiv (k,E)\in \mathcal B\times
\R$, the operators
\begin{equation} \label{l}
L_0^{\sigma}:= E - H_{\Gamma}(k), \quad 
L_1^{\sigma}:=\i \partial_t + \i k \cdot \nabla_x +\diverg_x\nabla_y,
\quad
L_2:=\frac12\, \Delta_x
\end{equation}
this yields 
\begin{align}\label{x0}
X_0:= 
\sum_{\sigma \in \mathcal G_\s^{(1)}}(L_0^{\sigma }A_{0,\sigma} ) \be_\sigma,
\end{align}
and also
\begin{equation}\label{x1}
\begin{split}
X_1:= & \ 
\sum_{\sigma \in \mathcal G_\s^{(3)}}(L_0^{{\sigma}}A_{1,\sigma}) \be_{\sigma} +
\sum_{\sigma \in \mathcal G_\s^{(1)}}(L_1^{\sigma}A_{0,\sigma} )\be_{\sigma}\\
 & \ - \kappa  \sum_{\sigma_j \in G_\s^{(1)}: \atop {j=1,2,3}}
A_{0,\sigma_1}\ol A_{0,\sigma_2} A_{0,\sigma_3} \be_{\sigma_1 - \sigma_2 + \sigma_3}
\end{split}
\end{equation}
where we have used the relation \eqref{relation}. In general we get for $n = 2,\dots, 3N+1$, 
\begin{equation}\label{xn}
\begin{split}
X_n:= &  \sum_{\sigma \in \mathcal G_\s^{(2n+1)}} (L_0^{\sigma }A_{n,\sigma} ) \be_\sigma + 
\sum_{\sigma \in \mathcal G_\s^{(2n-1)}} (L_1^{\sigma} A_{n-1,\sigma}) \be_\sigma + \sum_{\sigma \in \mathcal G_\s^{(2n-3)}} L_2 A_{n-2,\sigma} \be_\sigma \\
&  - \kappa \sum_{n_1+n_2+n_3 = n-1}  \sum_{\sigma_j \in G_\s^{(2n_j+1)}:  \atop {j=1,2,3}}
A_{n_1,\sigma_1}\ol A_{n_2,\sigma_2} A_{n_3,\sigma_3} \be_{\sigma_1 - \sigma_2 + \sigma_3}.
\end{split}
\end{equation}
Here one should note that $A_{n, \sigma } (t,x,y) \equiv 0$, 
for all $n \geq N+1$, by assumption.

Now we shall subsequently construct $A_{n,\si}$ such that $X_n
\equiv 0$ for $n=0,\ldots,N$.  
To this end we have to compare \emph{equal}  
coefficients of $\be_\si$. 
We consequently obtain, from \eqref{x0}--\eqref{xn}, equations of the form
\begin{equation} \label{allg}
L_0^\sigma A_{n,\sigma} = F_{n,\si}, \quad \si \in \mathcal G_\s^{(2n+1)},
\end{equation}
where the r.h.s.\ $F_{n,\si}$ can be determined from the coefficients $(
A_{m,\si})_{  \si \in \mathcal G_\s^{(2m+1)}}$ for  
$m = 0,1,\dots, n-1$. More precisely
\begin{equation} \label{fnsi}
\begin{split}
F_{n,\si}:= & \, 
- L_1^\si A_{n-1,\si} - L_2 A_{n-2, \si} \\
& \, +\kappa \sum_{{n_1+n_2+n_3=n-1}}
\sum_{{\sigma_{j}\in \mathcal G_\s^{(2n_j +1)}:}\atop{\si_1-\si_2+\si_3 = \si}} A_{n_1,{\si_1}} \ol A_{n_2,{\si_2}} A_{n_3,{\si_3}}.
\end{split}
\end{equation}
Note that here the summation index has changed in comparison to \eqref{xn}. 
To proceed further we need to distinguish two possible cases:

{\bf Case I.}  On the one hand, for $\si \in \mathcal G_\s^{(2n+1)}
\setminus \mathcal G_\s^{(1)}$ the closure condition up to order $2n+1$ implies
invertibility of $L_0^\si$, \ie $(L_0^\si)^{-1} \in \text{Lin}
(L^2(\YYY),H^2(\YYY))$ and we obtain
\begin{equation} \label{noninv}
A_{n, \si}(t,x,y)  = (L_0^\si)^{-1} F_{n,\si}(t,x,y).
\end{equation}
The corresponding modes are called \emph{non-resonant}.

{\bf Case II.}  
On the other hand, if $\sigma \in \mathcal G_\s^{(1)}$, then $L^\sigma_0$
has a nontrivial kernel. In order to distinguish this case more prominently from the one above, we 
shall from now on use the notation $\vsi \equiv \sigma \in \mathcal G_\s^{(1)}$, which also 
characterizes the basic resonant modes $\mu\in{\mathcal S}$ via $\vsi = \Sigma(\mu)$. 

Using the orthogonal projections $\mathbb P_\vsi$ onto this kernel the
necessary and sufficient solvability condition for \eqref{allg} in
this case is then given by 
\begin{equation} \label{sovcon}
\mathbb P_\vsi F_{n,\vsi}(t,x,y)=0,
\end{equation}
which yields (recall that $\text{dim} (\text{ran}\,\mathbb P_\vsi) =1$, by
assumption) 
\begin{equation} \label{sovcon1}
\left<  \chi_\vsi , \, F_{n,\vsi} \right>_{L^2(\YYY)}= 0 , \quad \text{for
  $0 \neq \chi_\vsi \in \text{ran}\,\mathbb P_\vsi$}. 
\end{equation}
Under this condition, we consequently obtain
\begin{equation}\label{ampli}
A_{n, \vsi} (t,x,y)= a_{n,\vsi}(t,x) \chi_\vsi(y) + A^\perp_{n,\vsi} (t,x,y)
\end{equation}
with
\begin{equation} \label{perp}
A^\perp_{n, \vsi}:=(L_0^\vsi)^{-1} (1- \mathbb P_\vsi) F_{n,\vsi}.
\end{equation}
Note that here $a_{n,\vsi}$ is still undetermined. However the condition \eqref{sovcon1}  
provides a partial differential equation for $a_{n-1,
  \varsigma}$, the so far undetermined part of the previous step.

\begin{remark}\label{r3.1}
  In the case of non-simple eigenvalues $E_\ell(k)$, \ie $\text{dim}
  (\text{ran}\,\mathbb P_\vsi) = R >1$, we can simply use a 
  smooth orthonormal basis $\{ \chi_{\vsi, r} \}_{r=1}^R$ of $\ran \mathbb
  P_\vsi$ and generalize the above given formulas \eqref{sovcon},
  \eqref{ampli} accordingly. 
\end{remark} 

\subsection{Explicit calculations}\label{ss3.2}

In the following we shall determine the approximate solution in more detail,
by following the above described strategy.

{$n=0$}: We need to solve $X_0 \equiv 0$ and immediately 
note that in this case, Case I above is obsolete. Then, in Case II, equation \eqref{x0} implies  
\begin{equation*}
L_0^\vsi A_{0,\vsi}=0,\quad \vsi \in \mathcal G_\s^{(1)}
\end{equation*}
and thus \eqref{ampli} simplifies to 
\begin{equation} \label{a0}
A_{0,\vsi} = a_{0,\vsi}(t,x) \chi_\vsi(y),
\end{equation}
with $a_{0,\vsi}$ still to be determined.

{$n=1$}: In the next step we have to solve $X_1 \equiv 0$. In Case I, 
\ie for $\si \in \mathcal G_\s^{(3)} \setminus \mathcal G_\s^{(1)}$, the equations
\eqref{x1} and \eqref{noninv} imply
\begin{equation}\label{A1}
\begin{split} 
A_{1, \si} = & \ \kappa \, (L_0^\si)^{-1}\Big(  \sum_{{\varsigma_1,\varsigma_2,\varsigma_3 \in \mathcal G_\s^{(1)}}\atop 
 \si = \varsigma_1 - \varsigma_2 + \vsi_3}
A_{0,\varsigma_1}\ol A_{0,\vsi_2} A_{0,\varsigma_3} \Big)\\
= & \ \kappa  \sum_{{\varsigma_1,\varsigma_2,\varsigma_3 \in \mathcal G_\s^{(1)}:}\atop \si = \varsigma_1 - \varsigma_2 + \varsigma_3} 
a_{0,\vsi_1} \overline {a}_{0,\vsi_2} a_{0,\vsi_3} \, (L_0^\si)^{-1} 
\left(\chi_{\vsi_1} \overline {\chi}_{\vsi_2} \chi_{\vsi_3} \right),
\end{split}
\end{equation}
where for the second equality we simply insert \eqref{a0}. 
We proceed with Case II: To this end the solvability condition \eqref{sovcon}
for $\vsi \in \mathcal G_\s^{(1)}$ allows us to determine the so far still unknown
$a_{0,\vsi}$, obtained before. By \eqref{sovcon1}, this yields
\begin{equation}\label{sx1}
\int_{\YYY} 
\overline \chi_{\vsi}(y) \, F_{1,\vsi}(t,x,y) 
\, \D y=0,
\end{equation}
where 
\begin{equation}\label{F1vsi}
F_{1,\vsi}=  L_1^\vsi A_{0, \vsi}  - \kappa
 \sum_{{\vsi_1,\vsi_2,\vsi_3 \in \mathcal G_\s^{(1)}}: \atop {\vsi = \varsigma_1 - \varsigma_2 + \varsigma_3}} A_{0, \vsi_1} \overline{A}_{0, \vsi_2}A_{0, \vsi_3}.
\end{equation}
From the definition of $L_1^\varsigma$ in \eqref{l} and using the following basic identity  
\begin{equation*}
  \left<  \chi_\ell , \,  (-\i\nabla_y+k) \chi_\ell  \right>_{L^2(\YYY)}
  =  \nabla_k E_\ell(k),
 \end{equation*}
a straightforward calculation shows, \cf the appendix of \cite{CMS}, that \eqref{sx1} can be written as 
\begin{equation}\label{trans1}
\partial_t a_{0,\vsi} + \vartheta_\vsi \cdot\nabla_x a_{0,\vsi} + 
\sum_{{\vsi_1,\vsi_2,\vsi_3 \in \mathcal G_\s^{(1)}}: 
\atop {\vsi = \varsigma_1 - \varsigma_2 + \varsigma_3}}
\i \kappa_{(\vsi_1,\vsi_2,\vsi_3,\vsi)} 
  a_{0, \vsi_1} \overline{a}_{0, \vsi_2}a_{0, \vsi_3} =0,
\end{equation}
where we denote
\begin{equation*}
\kappa_{(\vsi_1,\vsi_2,\vsi_3,\vsi)}:= \kappa 
\int_{\YYY}
 \chi_{\vsi_1}(y) \overline {\chi}_{\vsi_2}(y)
\chi_{\vsi_3}(y) \overline {\chi}_{\vsi}(y) \, \D y .   
\end{equation*}
Since any $\vsi \in \mathcal G_\s^{(1)}$ corresponds to a unique $m=1,\dots, M$, 
via $\vsi = \Sigma(\mu_m)=(k_m, E_{\ell_m}(k_m))$, 
with $\mu_m =(k_m,\ell_m) \in \mathcal S$, we can shortly write
\[
a_{0,\vsi}(t,x) \equiv a_m(t,x) , \quad \chi_\vsi(y) \equiv
\chi_{\ell_m}(y;k_m), \quad \vartheta_\vsi \equiv \vartheta_m.
\] 
Hence the amplitude equations \eqref{trans1} can be equivalently written 
in the form \eqref{systemIntro} used before. 

In summary we have now fully determined the expressions \eqref{a0} and \eqref{A1}, 
and from \eqref{ampli} we finally get that
\begin{align*} 
A_{1, \vsi} (t,x,y)= a_{1,\vsi}(t,x) \chi_\vsi(y) 
+ (L_0^\vsi)^{-1} (1- \mathbb P_\vsi) F_{1,\vsi}(t,x,y),
\end{align*}
where again the coefficients $a_{1,\vsi}$ are still arbitrary and have to be 
determined by the solvability condition for $n=2$. Since this yields an
initial value problem for $a_{1,\vsi}$ (see below) we are free to choose its 
value at time $t=0$. For simplicity we shall put $a_{1,\vsi}(0,\cdot)=0$.
 
{\bf $n\geq 2$}: From here we proceed inductively, as described above, by
solving $X_n\equiv0$. The only difference that occurs is that in Case II 
the coefficients $a_{n,\vsi}$ do not solve a nonlinear
initial value problem, but rather a system of \emph{linear, inhomogeneous}
transport equations. Indeed, lengthy calculations show that
\begin{equation} \label{intrans}
\begin{split}
  \partial_t a_{n,\vsi} + \vartheta_\vsi \cdot\nabla_x a_{n,\vsi} +
  \sum_{{\tilde\vsi, \vsi_1,\vsi_2 \in \mathcal G_\s^{(1)}}: \atop {\tilde\vsi-\varsigma_1+\vsi_2
      = \vsi}}
 2 \i \kappa_{(\tilde\vsi, \vsi_1,\vsi_2, \vsi)} 
    a_{n,\tilde\vsi} \overline{a}_{0,{\vsi_1}}  a_{0,\vsi_2}\quad\text{ } \\
 + \sum_{{\vsi_1,\tilde\vsi, \vsi_2 \in \mathcal G_\s^{(1)}}: \atop {\vsi_1-\tilde\vsi+\vsi_2 = \vsi}}
 \i \kappa_{(\vsi_1, \tilde\vsi, \vsi_2, \vsi)}  
   a_{0,\vsi_1} \overline{a}_{n,\tilde\vsi} a_{0,\vsi_2} + \i \Theta_{n} = 0,
\end{split}
\end{equation}
with source term 
\begin{align*}
  \Theta_n :=& \ -\mathbb P_\vsi \, \big(L_1^\vsi A_{n,\vsi}^\perp+L_2 A_{n-1,\vsi}\big) \\
  & \ +\kappa\,\mathbb P_\vsi \Big(\sum_{{\tilde\vsi, \vsi_1,\vsi_2 \in \mathcal G_\s^{(1)}}:
    \atop {\tilde\vsi-\varsigma_1+\vsi_2 = \vsi}} 2 A^\perp_{n,\tilde\vsi}
  \overline{A}_{0,{\vsi_1}} A_{0,\vsi_2} + \sum_{{\vsi_1, \tilde\vsi, \vsi_2 \in
      \mathcal G_\s^{(1)}}: \atop {\vsi_1-\tilde\vsi+\vsi_2 = \vsi}}
  A_{0,\vsi_1} \overline{A}^\perp_{n,\tilde\vsi}  A_{0,\vsi_2}\Big )  \\
  & \ +\kappa\,\mathbb P_\vsi \Big(\sum_{{\vsi_1,\vsi_2 \in \mathcal G_\s^{(1)}\not \ni
      \si}: \atop {\si -\varsigma_1+\vsi_2 = \vsi}} 2 A_{n,\si}
  \overline{A}_{0,{\vsi_1}} A_{0,\vsi_2} + \sum_{{\vsi_1, \vsi_2 \in
      \mathcal G_\s^{(1)}\not \ni \si}: \atop {\vsi_1-\si+\vsi_2 = \vsi}}
  A_{0,\vsi_1} \overline{A}_{n,\si}  A_{0,\vsi_2}\Big )  \\
  & \ + \kappa\, \mathbb P_\vsi \Big(\sum_{{n_1+n_2+n_3=n:\atop n_j\leq n-1}}
  \sum_{{\sigma_{j}\in \mathcal G_\s^{(2n_j +1)}}:\atop{\si_1-\si_2+\si_3 = \vsi}}
  A_{n_1,{\si_1}} \ol A_{n_2,{\si_2}} A_{n_3,{\si_3}} \Big).
\end{align*}
Again we shall put $a_{n, \vsi} (0,\cdot)= 0$ for simplicity, 
since we are free to
choose the initial values for \eqref{intrans}. Finally, we note that, in order
to satisfy $X_N \equiv 0$ (\ie in the last step), we do not need
to determine the corresponding $a_{N,\vsi}$ and thus  we can impose
$a_{N,\vsi}(t,\cdot)\equiv 0$.

\section{Justification of the amplitude equations}  
\label{just}

In order to obtain our main result, Theorem \ref{th1}, we have to 
justify the above given formal calculations rigorously. 
In particular we need a nonlinear stability result on our approximation.

\subsection{Estimates on the approximate solution and on the residual} 
\label{ss4.1}

In the above section we derived an approximate solution $u_N^\e$ which 
admits an asymptotic expansion of the following form
\begin{equation}
\label{uNfinal} 
\begin{split}
u_N^\e (t,x) = & \ \sum_{\vsi \in G_\s^{(1)}} a_{0,\vsi}(t,x) \chi_\vsi \left(\frac {x}{\e} \right) \be_\vsi \left(\frac {t}{\e},\frac{x}{\e} \right)\\
& \ + \sum_{n=1}^N      \e^n  \sum_{\vsi \in G_\s^{(1)}} \left( a_{n,\vsi}(t,x) \chi_\vsi \left(\frac {x}{\e} \right) + 
A_{n,\vsi}^\perp \left(t,x,\frac{x}{\e} \right) \right) \be_\vsi \left(\frac {t}{\e},\frac{x}{\e}\right) \\
 & \ +   \sum_{n=1}^N   \e^n \sum_{\si \in G_\s^{(2n+1)}\setminus G_\s^{(1)}} A_{n,\si}\left(t,x,\frac{x}{\e} \right)   \be_\si \left(\frac {t}{\e},\frac{x}{\e}\right)
\end{split}
\end{equation}
where the first two lines on the r.h.s.\ include only resonant modes while the
last one takes into account the generated non-resonant terms.  To this end we
required the appearing finite set of modes to be closed of order $\Lambda =
2N{+}1$.

To proceed further let us, for $s\in[0,\infty)$, introduce the
scaled Sobolev spaces 
\begin{equation*}
H^s_\e :=\left\{ f^\e \in L^2(\R^d)\ ;\  {\|f^\e\|}_{H^s_\e}  <\infty\right\},
\end{equation*}
with
\begin{equation*}
{\| f^\e \|}^2_{H^s_\e}:= \int_{\R^d} {(1+  |\e p \, |^2)}^s \, | \widehat f
(p) |^2 \, \D p.
\end{equation*}
Here $\widehat f$ denotes the usual Fourier transform of $f$ in $L^2(\R^d)$. 
Note that in $H_\e^s$ the following Gagliardo--Nirenberg inequality holds
\begin{equation}\label{GN}
\forall\,s> \frac{d}{2} \ \, \exists \, C_\infty>0: \quad 
{\|\, f \, \|}_{L^\infty} \leq C_\infty {\|\, f \, \|}_{H^s} \leq  
C_\infty {\e^{-d/2}}
{\|\, f \, \|}_{H^s_\e},
\end{equation}
where the factor $\e^{-d/2}$ is easily obtained by scaling. 
In the following lemma we collect the a-priori estimates on
$u_N^\e$ needed in order to prove our main result.

\begin{lemma} \label{lem2-neu}
For $d,N\in\N$ and $s\in[0,\infty)$ let $K=\max\{0,s+\frac{d}{2}-2\}$ and 
$S>N+s+\frac{d}{2}$, or, if $d=1$, $S\ge N+s+1$. Moreover, assume 
that $\partial^\alpha V_\Gamma \in L^\infty (\YYY)$, for all $|\alpha| \leq K$,
and let  
$(a_1,\ldots,a_M) \in C^0( [0,T),H^S(\R^d))^M$ 
be a solution of the amplitude equations \eqref{systemIntro}. 

Then, the approximate solution $u^\e_N$, given in \eqref{uNfinal}, 
satisfies the following estimates. 
For each $T_*\in(0,T)$ there exist positive constants
$C_a,C_b,C_r>0$, such that, for $\e\in(0,1)$, $|\alpha| \leq s$, 
and $t \in [0, T_*]$, it holds
\begin{align*}
 { \|\, (\e\d_x)^\alpha u^\e _{N} (t,\cdot) \, \| }_{L^\infty} \leq C_a,\quad
 { \|\, u^\e _{N} (t,\cdot) \, \| }_{H^{s}_\e} \leq C_b,\quad
 { \| \, \res(u_N^\e) (t,\cdot) \, \| }_{H^{s}_\e} \leq C_r \e^{N+1}.
\end{align*}
\end{lemma}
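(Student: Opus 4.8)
The plan is to prove the three estimates in the order they are stated, since each one feeds the next. The structure of $u_N^\e$ in \eqref{uNfinal} is a finite sum of terms of the form $\e^n B(t,x,x/\e)\be_\si(t/\e,x/\e)$, where $B$ is built from the amplitudes $a_{n,\vsi}$ and the correctors $A_{n,\si}$ (or $A^\perp_{n,\vsi}$), and $\be_\si$ is a bounded oscillatory factor with $|\be_\si|=1$. The first point to establish is therefore a regularity bookkeeping lemma: starting from $(a_1,\dots,a_M)\in C^0([0,T);H^S)$ with $S>N+s+d/2$, one shows by induction on $n$ that $a_{n,\vsi}(t,\cdot)\in H^{S-n}$ and that $A^\perp_{n,\vsi},A_{n,\si}$ (as functions of $(x,y)$, with the $y$-dependence living in $H^{2}(\YYY)$ or better, controlled by the assumption $\partial^\alpha V_\Gamma\in L^\infty$ for $|\alpha|\le K$) depend on $x$ with the same $H^{S-n}$ regularity. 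This is where the linear transport equations \eqref{intrans}, the algebraic formula \eqref{perp} with $(L_0^\si)^{-1}\in\mathrm{Lin}(L^2(\YYY),H^2(\YYY))$, and Moser-type product estimates in $H^{S-n}$ enter; one loses one derivative in $x$ per level from $L_1^\si$ and $L_2$, which is exactly why $S>N+s+d/2$ is imposed, leaving $S-N>s+d/2$ derivatives at the top level.

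\textbf{The $L^\infty$ and $H^s_\e$ bounds.} Given the regularity bookkeeping, the $L^\infty$ bound on $(\e\d_x)^\alpha u_N^\e$ for $|\alpha|\le s$ follows by distributing $\e\d_x$ over the product structure: each $\e\d_x$ either hits the slow argument $x$ (producing a factor $\e$, harmless), or the fast argument $x/\e$ inside $\chi_\vsi$ or inside $\be_\si$ (producing an $O(1)$ bounded factor, using smoothness and periodicity of the Bloch functions together with the $V_\Gamma$ regularity assumption, and boundedness of $\sigma$). After at most $s$ such differentiations one is left with finitely many terms, each a product of a slow $H^{S-n}\hookrightarrow L^\infty$ function (Sobolev embedding, since $S-n\ge S-N>d/2$) times bounded fast factors, uniformly in $\e\in(0,1)$ and $t\in[0,T_*]$; continuity in $t$ on the compact interval gives the constant $C_a$. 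For the $H^s_\e$ estimate one uses the scaled Gagliardo–Nirenberg/interpolation structure: by definition of $\|\cdot\|_{H^s_\e}$ it suffices to control $\|(\e\d_x)^\alpha u_N^\e\|_{L^2}$ for $|\alpha|\le s$, and the same distribution-of-derivatives argument applies, except now the slow factors are measured in $L^2$ (again $H^{S-n}$-bounded) while the fast oscillations contribute $O(1)$; the key point is that the scaling in $H^s_\e$ is precisely matched to the two-scale structure so no negative powers of $\e$ appear. Hence $\|u_N^\e(t,\cdot)\|_{H^s_\e}\le C_b$.

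\textbf{The residual bound — the main obstacle.} The delicate estimate is the one on $\res(u_N^\e)=\sum_{n=N+1}^{3N+1}\e^n X_n$. By construction $A_{n,\si}\equiv 0$ for $n\ge N+1$, so in the expressions \eqref{xn} for $n\ge N+1$ the terms $L_0^\si A_{n,\si}$ and $L_1^\si A_{n-1,\si}$ (for the part with index $\ge N+1$) drop out or are truncated, and $X_n$ for $n=N+1,\dots,3N+1$ consists only of $L_1^\si A_{n-1,\si}$ and $L_2 A_{n-2,\si}$ with $n-1$ or $n-2$ possibly equal to $N$, together with the nonlinear convolution terms $\kappa\sum_{n_1+n_2+n_3=n-1}A_{n_1,\si_1}\ol A_{n_2,\si_2}A_{n_3,\si_3}$ where at least one index exceeds $N$ is impossible — rather, the surviving nonlinear contributions are exactly those that were \emph{not} cancelled because the ansatz stops at $N$. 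The point is that every surviving term carries a prefactor $\e^n$ with $n\ge N+1$, so writing $\res(u_N^\e)=\e^{N+1}\sum_{n=N+1}^{3N+1}\e^{n-N-1}X_n$ it remains to bound each $\|X_n(t,\cdot)\|_{H^s_\e}$ uniformly in $\e$. This is again a product estimate, but now one must verify that the correctors $A_{n,\si}$ for $n$ up to $N$ — the highest ones — still have enough $x$-regularity, namely $H^{S-N}\hookrightarrow H^{s}$ after applying $L_1$ or $L_2$ (one or two more derivatives), which is guaranteed by $S-N-2\ge s$ when $K=s+d/2-2>0$, and in the edge cases ($d=1$, or $s$ small so $K=0$) by the stated relaxed hypothesis $S\ge N+s+1$ and the fact that $L_2$ contributes to $X_n$ only through $A_{n-2,\si}$. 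One also needs that the product of three functions that are each only in $H^{S-N}$ stays in $H^s$: since $S-N>d/2$, $H^{S-N}$ is a Banach algebra, and $s\le S-N$, so the product estimate closes. Assembling the finitely many terms and using continuity in $t$ on $[0,T_*]$ yields $\|\res(u_N^\e)(t,\cdot)\|_{H^s_\e}\le C_r\e^{N+1}$. The main obstacle throughout is the careful tracking of derivative losses versus the available regularity budget $S-N$, and making sure the fast/slow splitting is done so that \emph{no} inverse powers of $\e$ are generated by the $\d_x$ acting on $x/\e$-dependent factors — this is exactly the role of the scaled norm $H^s_\e$ and of $V_\Gamma$ being smooth enough that $\chi_\vsi(\cdot;k)$ and $(L_0^\si)^{-1}$ map into the required $y$-Sobolev spaces.
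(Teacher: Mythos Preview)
Your proposal is correct and follows essentially the same approach as the paper: an inductive regularity bookkeeping for the $A_{n,\sigma}$, then two-scale product/composition estimates for the three bounds. The paper packages the induction slightly more precisely as $A_{n,\sigma}\in C^0([0,T);H^{S-n}(\R^d;H^{K+2}(\YYY)))$, and for the residual it singles out $\Delta_x a_{N-1,\vsi}\in H^s$ as the one term forcing the extra constraint $S\ge N+s+1$ (your count ``$S-N-2\ge s$'' is one derivative too pessimistic there, since $L_2$ in $X_{N+1}$ hits $A_{N-1,\sigma}\in H^{S-N+1}$, not $A_{N,\sigma}$), but this is a minor bookkeeping point and not a difference in method.
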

\begin{proof} 
Since $u^\e _{N}$ is given by \eqref{uNfinal}, 
in order to prove the estimates of the lemma we need to establish the
regularity of $A_{n,\sigma}$ for $n=0,\ldots,N$, $\sigma\in \mathcal G_\s^{(2n+1)}$,
which have been calculated formally in Section \ref{formal}.
Under the assumptions $(a_1,\ldots,a_M) \in C^0( [0,T),H^S(\R^d))^M$ and
$\partial^\alpha V_\Gamma \in L^\infty (\YYY)$, for $|\alpha| \leq K$,
we shall first prove that it holds
\begin{equation}\label{Ansigma}  
A_{n,\sigma} \in C^0([0,T),H^{S-n}(\R^d,H^{K+2}(\YYY))). 
\end{equation}
Indeed, by \eqref{a0}, $A_{0,\vsi}(t,x,y) = a_{0,\vsi}(t,x) \chi_\vsi(y)$,
where $a_{0,\vsi}$ solves \eqref{trans1} 
and $\chi_\vsi$ is given by \eqref{bloch}, and we get immediately 
\eqref{Ansigma} for $n=0$ (\cf Lemma \ref{lem1} and  Lemma \ref{lem3} below).

From here it follows, on the one hand, that 
for $\sigma\in \mathcal G_\s^{(3)}\setminus \mathcal G_\s^{(1)}$,
by \eqref{allg} and \eqref{A1},  
$F_{1,\sigma}
=L_0^\sigma A_{1,\sigma} \in C^0([0,T),H^{S}(\R^d,H^{\tilde s}(\YYY)))$
with  $\tilde s=K+2$ and hence, by \eqref{fnsi}, 
$A_{1,\sigma} \in C^0([0,T),H^{S-1}(\R^d,H^{\tilde s}(\YYY)))$.
On the other hand, for $\vsi\in \mathcal G_\s^{(1)}$ we obtain by \eqref{F1vsi}
$F_{1,\vsi}\in C^0([0,T),H^{S-1}(\R^d,H^{\tilde s}(\YYY)))$,
and thus, by 
\eqref{perp},
$A_{1,\vsi}^\perp \in C^0([0,T),H^{S-1}(\R^d,H^{\tilde s}(\YYY)))$.
Moreover, 
$a_{1,\vsi}\chi_\vsi\in C^0([0,T),H^{S-1}(\R^d,H^{\tilde s}(\YYY)))$,
since $a_{1,\vsi}$ solves the linear inhomogeneous transport equation 
\eqref{intrans} (for $n=1$)
with initial condition $a_{1,\vsi}(0,\cdot)=0$, coefficients 
in  $C^0([0,T),H^{S}(\R^d))$,
and source term $\Theta_1\in C^0([0,T),H^{S-2}(\R^d))$.
Hence, by \eqref{ampli}, we obtain \eqref{Ansigma} for $n=1$. 
Proceeding inductively, we obtain \eqref{Ansigma} 
for all $n=0,\ldots,N$ via the above given steps.

Having established \eqref{Ansigma}, we aim to show the first estimate of the lemma. To this end 
we have to guarantee that for each $n=0,\ldots,N$, $\sigma\in \mathcal G_\s^{(2n+1)}$,  
there exists a $C>0$, such that  
\begin{equation*}
 { \big \|\, (\e\d_x)^\alpha A_{n,\sigma}\left(t,\cdot,\frac{\cdot}{\e}\right) \, \big \| 
}_{L^\infty} \leq C
\end{equation*}
holds true for all $\e\in(0,1)$, $|\alpha| \leq s$, and $t \in [0, T_*]$. 
Using the Gagliardo--Nirenberg inequality \eqref{GN}, we therefore require  
\begin{equation}\label{lem41eq1}
(\e\d_x)^\alpha A_{n,\sigma}\left(t,\cdot,\frac{\cdot}{\e} \right) \in 
H^{m}(\R^d)\quad\text{with ${m}>\frac{d}{2}$},
\end{equation}
for all $|\alpha|\le s$, i.e.
\begin{equation*}
A_{n,\sigma}\left(t,\cdot,\frac{\cdot}{\e}\right) \in 
H_\e^{m^*}(\R^d)\quad\text{with ${m^*}>s+\frac{d}{2}$}.
\end{equation*}
Thus, by \eqref{Ansigma}, we need $m^* =S-n+\tilde s>s+\frac{d}{2}$,
that is, either $S-n>|\beta|+\frac{d}{2}$ and $\tilde s\ge s-|\beta|$,
or $S-n\ge |\beta|$ and $\tilde s> s-|\beta|+\frac{d}{2}$, 
for all $|\beta|\le s$ and all $n=0,\ldots,N$. 
It turns out that in order to satisfy either of these conditions, the optimal
conditions we have to impose concerning the regularity of $A_{n,\sigma}$,
given in \eqref{Ansigma}, are 
$S-n>s+\frac{d}{2}$ and $\tilde s = K+2>s+\frac{d}{2}$, 
for $n=0,\ldots,N$, which directly yields the assumptions on $S$ and $K$
stated in the lemma.

The second estimate of the lemma then follows immediately by \eqref{lem41eq1}. In order to obtain the third estimate, 
having in mind the definitions \eqref{res} and \eqref{xn} with 
$A_{n,\sigma}\equiv0$ for $n\ge N+1$ and $a_{N,\vsi}\equiv0$ (\cf the note
after \eqref{intrans}), 
it is necessary and sufficient to assure additionally that
$\Delta a_{N-1,\vsi}\in H^s(\R^d)$, i.e., $a_{0,\vsi}\in H^{N+s+1}(\R^d)$.
This yields the additional condition $S\ge N+s+1$, and concludes the proof.
\end{proof}

\begin{remark}
Note that in order to derive the above given a-priori estimates for $u_N$,
the required regularity imposed on $\chi_\vsi$ is indeed independent of $N$. 
\end{remark}

We shall also need the following result on the linear time-evolution.

\begin{lemma} \label{lem3} For $\e \in (0,1)$ denote 
the unitary propagator corresponding to the linear Hamiltonian $H_{\rm per}^\e$, 
defined in \eqref{ham}, by
\[
U^\e(t) := \E^ {- \I H_{\rm per}^\e t/\e}.
\]
For $s\in [0,\infty)$ assume  
$\partial^\alpha V_\Gamma \in L^\infty (\YYY)$ for all $|\alpha| \leq
\max\{0,s{-}2\}$. 
Then there exists a $C_l>0$, such that 
\begin{equation}
\label{e:Ueps}
{\| \, U^\e(t) f^\e \, \|}_{H^s_\e} \leq {C_l}  \, {\|  f^\e  \|}_{H^s_\e}
\quad \text{ for all } t \in \R.
\end{equation}
\end{lemma}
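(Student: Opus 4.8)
The plan is to diagonalize $H_{\rm per}^\e$ by the Bloch--Floquet transform and to observe that the scaled Sobolev norm $\|\cdot\|_{H^s_\e}$ is, after this transform, essentially a weighted $L^2$-norm in which the propagator acts diagonally. Concretely, I would first recall that the semiclassical scaling $x\mapsto x/\e$ turns $H_{\rm per}^\e$ into (a rescaled copy of) $-\tfrac12\Delta+V_\Gamma(\cdot/\e)$, and that via the Bloch decomposition $L^2(\R^d)=\bigoplus_{\ell}\mathcal H_\ell$ one can write any $f^\e$ as a superposition over $k$ in the (rescaled) Brillouin zone and $\ell\in\N$ of the Bloch modes $\chi_\ell(x/\e;\e k)\E^{\I k\cdot x}$, with coefficients $g_\ell(k)$. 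On this representation $U^\e(t)$ simply multiplies $g_\ell(k)$ by the unimodular factor $\E^{-\I t E_\ell(\e k)/\e^2}$ — or, depending on how one bookkeeps the $\e$'s, $\E^{-\I t E_\ell(\e k)/\e}$ — and is therefore exactly an isometry on the plain $L^2$ of the coefficients.

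The only real issue is that $\|\cdot\|_{H^s_\e}$ is not literally that plain $L^2$-norm: it is $\int (1+|\e p|^2)^s|\widehat f(p)|^2\,\D p$ on the Fourier side of the original variable, and the Bloch eigenfunctions mix a band of Fourier frequencies $p=k+\gamma^*/\e$, $\gamma^*\in\Gamma^*$, for each fixed $k$. So the next step is to compare $(1+|\e p|^2)^s$ with an operator built intrinsically from $H_{\rm per}^\e$, e.g.\ $(1+ H_{\rm per}^\e)^{s/2}$ or rather $(\,\e^2(1{-}\Delta)+V_\Gamma(\cdot/\e)\,)$-type weights, which \emph{is} diagonalized by the Bloch transform (it multiplies $g_\ell(k)$ by $(1+E_\ell(\e k))^{s/2}$, up to harmless constants once $V_\Gamma$ is bounded below). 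I would prove a two-sided equivalence
\[
 c\,\|f^\e\|_{H^s_\e}\ \le\ \big\|(1+ H_{\rm per}^\e)^{s/2} f^\e\big\|_{L^2}\ \le\ C\,\|f^\e\|_{H^s_\e},
\]
valid for $\e\in(0,1)$ with constants independent of $\e$. For integer $s$ this is just the statement that $\e^2\Delta$ and $V_\Gamma(\cdot/\e)$ together control, and are controlled by, $(\e p)^{2}$-weights, which is where the hypothesis $\partial^\alpha V_\Gamma\in L^\infty$ for $|\alpha|\le\max\{0,s-2\}$ enters: one needs $\e^{|\alpha|}\partial^\alpha(V_\Gamma(\cdot/\e))=(\partial^\alpha V_\Gamma)(\cdot/\e)$ bounded uniformly in $\e$ in order to commute the weight past the potential and absorb the resulting lower-order terms. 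For non-integer $s$ I would pass through complex interpolation between consecutive integers, which keeps the constants uniform in $\e$.

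Once this norm equivalence is in hand the lemma is immediate: since $(1+ H_{\rm per}^\e)^{s/2}$ commutes with $U^\e(t)=\E^{-\I H_{\rm per}^\e t/\e}$ and $U^\e(t)$ is unitary on $L^2(\R^d)$,
\[
 \|U^\e(t)f^\e\|_{H^s_\e}\ \le\ c^{-1}\big\|(1+H_{\rm per}^\e)^{s/2}U^\e(t)f^\e\big\|_{L^2}
 \ =\ c^{-1}\big\|U^\e(t)(1+H_{\rm per}^\e)^{s/2}f^\e\big\|_{L^2}
 \ =\ c^{-1}\big\|(1+H_{\rm per}^\e)^{s/2}f^\e\big\|_{L^2}
 \ \le\ C c^{-1}\|f^\e\|_{H^s_\e},
\]
so $C_l=Cc^{-1}$ works for all $t\in\R$. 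I expect the main obstacle to be purely technical: making the norm-equivalence constants genuinely $\e$-independent, i.e.\ checking that every commutator of the weight with $V_\Gamma(\cdot/\e)$ produces only terms of the form $(\partial^\alpha V_\Gamma)(\cdot/\e)$ times lower-order weights, with no uncompensated negative power of $\e$ — this is exactly what the stated derivative bound on $V_\Gamma$ is designed to guarantee. Everything else (unitarity of $U^\e(t)$, functional calculus, interpolation) is standard.
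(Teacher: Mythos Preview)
Your proposal is correct and, once you strip away the Bloch--Floquet preamble, it is exactly the paper's proof: establish the $\e$-uniform equivalence $\|f\|_{H^s_\e}\simeq \|(H_{\rm per}^\e)^{s/2}f\|_{L^2}$ (after shifting so that $H_{\rm per}^\e\ge 1$) via the commutator/integration-by-parts argument for integer $s$ and interpolation for general $s$, then use that $U^\e(t)$ commutes with $(H_{\rm per}^\e)^{s/2}$ and is unitary on $L^2$. The Bloch--Floquet diagonalization you open with is not actually used in your own argument and is not needed---functional calculus for the self-adjoint operator $H_{\rm per}^\e$ already gives you everything, so you can delete that first paragraph without loss.
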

\begin{proof} Recalling the definition of $H_{\rm per}^\e$, given in \eqref{ham}, 
we clearly have the basic $L^2$ estimate,
\[
{\|f^\e(t)\|}_{L^2} \equiv {\| U^\e(t) f^\e\|}_{L^2} = {\|f^\e \|}_{L^2},\quad 
 \text{ for all }  t \in \R,
\] 
since $H_{\rm per}^\e$ is self-adjoint.  Moreover, since $U^\e(t)$ obviously commutes
with (its generator) $H_{\rm per}^\e$, so does any power of the latter 
and we therefore obtain
\[
{\| (H_{\rm per}^\e)^{s/2} f^\e (t)\|}_{L^2} = {\| (H_{\rm per}^\e)^{s/2} f^\e \|}_{L^2}, \quad 
  \text{ for all } s, t\in \R.
\]
Without loss of generality we assume here that $H_{\rm per}^\e \geq 1$, 
otherwise we may add a constant independent of $\e\in (0,1)$. 

For $s=1$ this is nothing but energy conservation. Using the conservation of
the $L^2$ norm and the condition $V_\Gamma \in L^\infty(\YYY)$ we immediately
obtain the desired result \eqref{e:Ueps} for $s=1$ with $C_l= (
1{+}4\|V_\Gamma\|_{\infty})^{1/2}$. 

For general $s\in \N{\setminus}\{1\}$, we use integration by parts and 
the assumed regularity of $V_\Gamma$ to find 
 $\e$-independent constants $C_1,C_2 >0$ such that
\[
C_1 {\|(H_{\rm per}^\e)^{s/2}f^\e(t) \|}_{L^2} \leq \, {\|f^\e(t) \|}_{H^s_\e} \, \leq
\, C_2 {\|(H_{\rm per}^\e)^{s/2}f^\e (t) \|}_{L^2},
\]
\ie we have equivalence of the norms uniformly in $\e$. For general $s \geq 0$ 
the same holds true by interpolation. In summary we obtain boundedness of the
unitary group $U^\e(t)$ in all $H^s_\e$ 
and the assertion of the lemma is proved.
\end{proof}

\subsection{Stability of the approximation} 
\label{ss4.2}

First let us recall the following Moser-type lemma, \cf \cite[Lemma~8.1]{Ra},
which we shall use in the proof of Theorem \ref{th1} below.
\begin{lemma}\label{lem4}
Let $R>0$, $s \in [0,\infty)$, and $\mathcal N(z)=\kappa |z|^{2}z$ with $\kappa\in\R$. 
Then there exists a $C_s=C_s(R,s,d,\kappa)>0$ such that if 
\begin{equation*}
{\left\| (\e\d)^\alpha f \, \right\|}_{L^\infty} \leq R, 
\ \forall \, |\alpha| \leq s \ \text{and} \ 
\displaystyle {\left\| \, g \, \right\|}_{L^\infty} \leq R, 
\end{equation*}
then 
\begin{equation*}
{\left\| \, \mathcal N(f + g)
- \mathcal N(f) \, \right\|}_{H_\e^s} 
\leq C_s  {\left\| \, g \, \right\|}_{H_\e^s}\, . 
\end{equation*}
\end{lemma}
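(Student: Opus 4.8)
The plan is to prove the Moser-type estimate by a straightforward application of the fundamental theorem of calculus combined with the algebra and product estimates that hold in the scaled spaces $H^s_\e$. First I would write
\[
\mathcal N(f+g)-\mathcal N(f) = \int_0^1 \frac{\D}{\D\tau}\,\mathcal N(f+\tau g)\,\D\tau
= \int_0^1 D\mathcal N(f+\tau g)[g]\,\D\tau,
\]
where, since $\mathcal N(z)=\kappa|z|^2 z = \kappa z^2\bar z$, the differential $D\mathcal N(w)[h]$ is a fixed polynomial expression in $w,\bar w,h,\bar h$ that is quadratic in $(w,\bar w)$ and linear in $(h,\bar h)$ (explicitly $\kappa(2|w|^2 h + w^2\bar h)$). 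Hence the proof reduces to bounding, in $H^s_\e$, terms of the schematic form $w_1 w_2 g$ with each $w_i\in\{f+\tau g, \overline{f+\tau g}\}$ and $g\in\{g,\bar g\}$, uniformly in $\tau\in[0,1]$.

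Second I would record the scaled product/Moser inequality: for $s\ge 0$ there is a constant depending only on $s,d$ such that
\[
{\|\,h_1 h_2 h_3\,\|}_{H^s_\e} \le C\Big(\textstyle\sum_{\text{cyclic}} {\|(\e\d)^\alpha h_i\|}_{L^\infty,\,|\alpha|\le s}\Big)^2\,{\|h_3\|}_{H^s_\e}
\]
and its variants permuting which factor is measured in $H^s_\e$; this is the $\e$-scaled version of the classical Gagliardo--Nirenberg / Moser estimate, obtained by rescaling $x\mapsto x/\e$ so that $\d\mapsto\e^{-1}\d$ and $H^s\mapsto\e^{-d/2}H^s_\e$ with the powers of $\e$ cancelling (one uses the Leibniz rule: each of the $\le s$ derivatives landing on the three factors, and the factor carrying the top derivatives is estimated in $L^2$, the others in $L^\infty$; the $\e$'s balance because every spatial derivative in $H^s_\e$ carries a companion $\e$, matching the $\e\d$ appearing in the hypotheses). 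Applying this with $h_1,h_2$ the $w_i$'s and $h_3=g$, the hypotheses ${\|(\e\d)^\alpha f\|}_{L^\infty}\le R$ for $|\alpha|\le s$ and ${\|g\|}_{L^\infty}\le R$ control all the $L^\infty$-type quantities: for $f+\tau g$ one needs ${\|(\e\d)^\alpha(f+\tau g)\|}_{L^\infty}$ for $|\alpha|\le s$, and the $\alpha=0$ part uses ${\|g\|}_{L^\infty}\le R$ while the $|\alpha|\ge1$ derivatives kill the $g$-contribution only if $g$ is differentiated — so here I would note that the cleanest route is to only ever put the top derivatives on the factor measured in $H^s_\e$ (namely $g$), keeping $f+\tau g$ always in the $L^\infty$-with-$\le s$-derivatives slot, for which the bound is $\le R + \tau\|g\|_{L^\infty}\le 2R$ when $|\alpha|=0$ and $\le R$ from the hypothesis on $f$ plus the need to also control ${\|(\e\d)^\alpha g\|}_{L^\infty}$ for $1\le|\alpha|\le s$. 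That last point means one actually needs Gagliardo--Nirenberg once more to dominate ${\|(\e\d)^\alpha g\|}_{L^\infty}$ by ${\|g\|}_{L^\infty}^{1-\theta}{\|g\|}_{H^s_\e}^{\theta}$-type quantities; alternatively, and more simply, one absorbs everything into ${\|g\|}_{H^s_\e}$ directly via the interpolation ${\|(\e\d)^\alpha g\|}_{L^\infty}\le C{\|g\|}_{H^{s}_\e}$ for suitable $s$ — but that reintroduces an $\e^{-d/2}$, so the careful bookkeeping is to keep $g$ as the single "$H^s_\e$" factor and never differentiate $f+\tau g$ beyond what the hypothesis already bounds.

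Finally, integrating the resulting pointwise-in-$\tau$ bound ${\|D\mathcal N(f+\tau g)[g]\|}_{H^s_\e}\le C_s(R,s,d,\kappa){\|g\|}_{H^s_\e}$ over $\tau\in[0,1]$ gives the claim, with $C_s$ depending only on $R$, $s$, $d$, $\kappa$ as asserted. The main obstacle — really the only subtlety — is the $\e$-bookkeeping: one must verify that the classical Moser estimate scales to the $H^s_\e$ spaces with an $\e$-\emph{independent} constant, which works precisely because the hypotheses are phrased in terms of $(\e\d)^\alpha f$ (so each derivative comes paired with an $\e$) and because the $H^s_\e$ norm weights the $p$-th frequency by $(1+|\e p|^2)^{s/2}$; the homogeneity then matches on both sides and no negative power of $\e$ survives. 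Since this is exactly the content of \cite[Lemma~8.1]{Ra} adapted to the semiclassical scaling, I would either cite it directly or reproduce the one-line scaling argument.
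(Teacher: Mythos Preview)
The paper's own proof is two lines: cite \cite[Lemma~8.1]{Ra} for $s\in\N$ and interpolate for general $s\ge 0$. Your closing sentence lands on exactly this, so at that level you and the paper agree.

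Your attempted direct argument, however, has a real gap. By writing the difference via the fundamental theorem of calculus and keeping $w=f+\tau g$ as a single object, you are forced to control either $\|(\e\partial)^\alpha w\|_{L^\infty}$ for all $|\alpha|\le s$ (your one-sided triple-product estimate) or $\|w\|_{H^s_\e}$ (the two-sided Kato--Ponce version). Neither is available: the hypotheses give no $L^\infty$ bound on $(\e\partial)^\alpha g$ for $|\alpha|\ge 1$, and no $H^s_\e$ bound on $f$. You notice the first difficulty, but the fix you sketch (``never differentiate $f+\tau g$ beyond what the hypothesis already bounds'') is not something the Leibniz expansion lets you choose, and the alternative Gagliardo--Nirenberg route indeed costs $\e^{-d/2}$, which would destroy the $\e$-uniformity. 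The clean way out (and what Rauch actually does for integer $s$) is to exploit that $\mathcal N$ is a cubic polynomial: expand $\mathcal N(f{+}g)-\mathcal N(f)$ directly into the five monomials $2|f|^2g$, $f^2\bar g$, $2f|g|^2$, $\bar f g^2$, $|g|^2g$, each containing at least one factor of $g$ or $\bar g$. Factors of $f,\bar f$ are then handled by the one-sided Leibniz estimate using $\|(\e\partial)^\alpha f\|_{L^\infty}\le R$, while products of $g$'s are handled by the genuinely two-sided tame estimate $\|g_1g_2\|_{H^s_\e}\le C(\|g_1\|_{L^\infty}\|g_2\|_{H^s_\e}+\|g_1\|_{H^s_\e}\|g_2\|_{L^\infty})$, iterated once for the cubic term; only $\|g\|_{L^\infty}\le R$ and $\|g\|_{H^s_\e}$ are needed, and both estimates scale with an $\e$-independent constant. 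After this the interpolation step for non-integer $s$ goes through as you say.
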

\begin{proof} The proof can be found in \cite{Ra} for $s \in \N$ and follows by interpolation for general $s \in [0,\infty)$.
\end{proof}

With the above results at hand, we are now able to establish our main 
result, which justifies rigorously the validity of the amplitude equations
\eqref{systemIntro}, describing the macroscopic dynamics of $M$ 
modulated pulses for a closed mode system of order $\Lambda=2N+1$ (with $N$ depending on $d$). 

\begin{theorem}\label{th1}
For $d\in\N$ choose  $s, S \in[0,\infty)$ and $N\in \N$ such that 
$N,s>\frac{d}{2}$ and $S>N+s+\frac{d}{2}$, or, if $d=1$, $S\ge N+s+1$.  
Let $K=\max\{0,s+\frac{d}{2}-2\}$ and  assume 
$\partial^\alpha V_\Gamma \in L^\infty (\YYY)$ for all $|\alpha| \leq K$. 
Moreover, let the finite system of modes $\mathcal
S=\{\mu_1,\ldots,\mu_M: M\in\N\}$ be closed of order $\Lambda=2N+1$. 

Then for any solution $(a_1,\ldots,a_M) \in C^0( [0,T),H^S(\R^d))^M$ 
of the amplitude equations \eqref{systemIntro}, 
and any $t_*\in(0,T)$, $\beta\in(\frac{d}{2},N]$, $c>0$,
there exist an $\e_0\in(0,1)$ and a $C>0$, 
such that for all $\e\in(0,\e_0)$,
the approximate solution $u_{N}^\e$, constructed above, 
and any exact solution $u^\e$ of \eqref{nls}
with 
\[{\| \, u^\e (0,\cdot)-u_{N-1}^\e (0,\cdot) \, \|}_{H_\e^s} \leq c \e^\beta,\]
satisfy 
\[{\| \, u^\e (t,\cdot)-u_{N-1}^\e (t,\cdot) \, \|}_{H_\e^s} \leq C \e^\beta
\quad \text{for all  }t \in [0, t_*].
\]
\end{theorem}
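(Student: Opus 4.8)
The plan is to prove the error estimate by a standard nonlinear stability (Gronwall-type) argument for semilinear Schr\"odinger equations, written in the scaled Sobolev spaces $H^s_\e$. Set $e^\e(t):= u^\e(t,\cdot)-u_{N-1}^\e(t,\cdot)$, and note that by Lemma \ref{lem2-neu} (applied with $N$ replaced by $N-1$ and with $N$ as stated) the residual $\res(u_{N-1}^\e)$ is of order $\e^{N}$ in $H^s_\e$, while $u_{N-1}^\e$ and its scaled $x$-derivatives up to order $s$ are bounded in $L^\infty$ uniformly in $\e$. Actually, to get the sharp power $\e^\beta$ with $\beta\le N$ it is cleaner to compare with $u_N^\e$ first: since $u_N^\e - u_{N-1}^\e = \O(\e^N)$ in $H_\e^s$ and $\res(u_N^\e)=\O(\e^{N+1})$, it suffices to control $u^\e - u_N^\e$, and then absorb the $\O(\e^N)$ discrepancy into the final constant (here one uses $\beta\le N$).

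First I would write Duhamel's formula for $e^\e$ using the linear propagator $U^\e(t)=\E^{-\I H^\e_{\rm per}t/\e}$ from Lemma \ref{lem3}. Plugging $u^\e = u_N^\e + e^\e$ into \eqref{nls} and subtracting the equation $\I\e\partial_t u_N^\e - H^\e_{\rm per}u_N^\e - \e\kappa|u_N^\e|^2u_N^\e = \res(u_N^\e)$ gives
\begin{equation*}
\I\e\partial_t e^\e = -\frac{\e^2}{2}\Delta e^\e + V_\Gamma(x/\e)e^\e + \e\kappa\big(\mathcal N(u_N^\e+e^\e)-\mathcal N(u_N^\e)\big) - \res(u_N^\e),
\end{equation*}
with $\mathcal N(z)=|z|^2z$, hence
\begin{equation*}
e^\e(t) = U^\e(t)e^\e(0) - \I\kappa\int_0^t U^\e(t{-}\tau)\big(\mathcal N(u_N^\e+e^\e)-\mathcal N(u_N^\e)\big)(\tau)\,\D\tau + \frac{\I}{\e}\int_0^t U^\e(t{-}\tau)\res(u_N^\e)(\tau)\,\D\tau.
\end{equation*}
Taking $H_\e^s$ norms, using that $U^\e(t)$ is bounded on $H_\e^s$ uniformly in $\e$ (Lemma \ref{lem3}), using the residual estimate $\|\res(u_N^\e)(\tau)\|_{H^s_\e}\le C_r\e^{N+1}$, and using the Moser-type Lemma \ref{lem4} to bound $\|\mathcal N(u_N^\e+e^\e)-\mathcal N(u_N^\e)\|_{H^s_\e}\le C_s\|e^\e\|_{H^s_\e}$, one gets an integral inequality
\begin{equation*}
\|e^\e(t)\|_{H^s_\e} \le C_l\big(\|e^\e(0)\|_{H^s_\e} + t_*C_r\e^{N} + C_l C_s\int_0^t\|e^\e(\tau)\|_{H^s_\e}\,\D\tau\big),
\end{equation*}
valid as long as the hypothesis $\|g\|_{L^\infty}\le R$ of Lemma \ref{lem4} holds for $g=e^\e(\tau)$. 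Gronwall's lemma then yields $\|e^\e(t)\|_{H^s_\e}\le C\e^\beta$ on $[0,t_*]$ once $\|e^\e(0)\|_{H^s_\e}\le c'\e^\beta$.

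The main obstacle — and the only genuinely delicate point — is the \emph{continuation/bootstrap argument} needed to legitimately invoke Lemma \ref{lem4}: that lemma requires $\|e^\e(\tau)\|_{L^\infty}\le R$, but we only control $e^\e$ in $H^s_\e$, and the Gagliardo--Nirenberg inequality \eqref{GN} gives $\|e^\e\|_{L^\infty}\le C_\infty\e^{-d/2}\|e^\e\|_{H^s_\e}$, which loses a factor $\e^{-d/2}$. So I would run a continuity argument on the maximal time $T^\e\le t_*$ up to which $\|e^\e(t)\|_{H^s_\e}\le \e^{\beta}$ holds (say with a fixed large constant): on $[0,T^\e]$ one has $\|e^\e(t)\|_{L^\infty}\le C_\infty\e^{\beta-d/2}\to 0$ since $\beta>d/2$, so the $L^\infty$-smallness hypothesis of Lemma \ref{lem4} is satisfied for $\e$ small; the Gronwall estimate above then \emph{improves} the bound to $\|e^\e(t)\|_{H^s_\e}\le C_l(c' + t_*C_r)\e^\beta\,\E^{C_l^2C_st_*}$, and by choosing the a-priori constant larger than this (and $\e_0$ small) one concludes $T^\e=t_*$ by the usual open-closed argument. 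This is exactly where the restriction $\beta>d/2$ (equivalently $N>d/2$, so that $\beta$ can be chosen in $(d/2,N]$) enters, and where the $\e^{-d/2}$-loss in \eqref{GN} is paid for by the gain $\e^{N+1}\cdot\e^{-1}=\e^N$ from the residual together with $\beta\le N$. Finally I would translate the $u_N^\e$-estimate back to the $u_{N-1}^\e$-estimate claimed in the theorem using $\|u_N^\e-u_{N-1}^\e\|_{H^s_\e}=\O(\e^N)\le\O(\e^\beta)$, and note that the hypothesis $\|u^\e(0,\cdot)-u_{N-1}^\e(0,\cdot)\|_{H_\e^s}\le c\e^\beta$ gives the required bound on $\|e^\e(0)\|_{H^s_\e}$ after the same triangle-inequality adjustment.
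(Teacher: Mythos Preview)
Your proposal is correct and follows essentially the same route as the paper's proof: write $u^\e=u_N^\e+e^\e$, apply Duhamel with the propagator $U^\e$, use Lemma~\ref{lem2-neu} for the residual and $L^\infty$ bounds, Lemma~\ref{lem3} for the semigroup, Lemma~\ref{lem4} for the nonlinear increment, and close via a Gronwall estimate inside a bootstrap/continuity argument exploiting $\beta>d/2$ through \eqref{GN}; finally pass from $u_N^\e$ to $u_{N-1}^\e$ using $\|u_N^\e-u_{N-1}^\e\|_{H^s_\e}=\O(\e^N)$. The only cosmetic difference is that the paper factors out $\e^\beta$ from the start by writing $u^\e=u_N^\e+\e^\beta w^\e$ and bounds $\|w^\e\|_{H^s_\e}$, whereas you track $\|e^\e\|_{H^s_\e}$ directly.
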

\begin{remark}
  Note, that the approximate solution $u_N^\e$ contains terms up to order
  $\O(\e^N)$, whereas the above given error estimates include only
  $u^\e_{N-1}$.  The reason for this is that for our proof we need to work 
  with $u_N^\e$ but still the obtained error is only of the order
  $\O(\e^\beta)$ with $\beta\le N$. We can therefore eventually neglect the
  last term in $u_N^\e$, or, loosely speaking, we can move it to
  the right hand side of the above given estimate.
\end{remark}
\begin{proof} We write the exact solution of \eqref{nls} in the form
\[
u^\e(t,x) = u_N^\e(t,x) + \e^{\beta } w^\e(t,x)
\]
and denote $\varrho^\e(t):={ \| \, w^\e (t) \, \|}_{H_\e^s}$. Then, 
clearly, $\varrho^\e(0)\leq c$, 
and we will show that there exist $C>0$, $\e_0 \in (0,1)$, 
such that, for all $\e \in (0, \e_0]$, it holds $\varrho^\e(t) \leq C$ 
for $t \in [0, t_*]$. Inserting this ansatz into \eqref{nls}, written as 
\begin{equation*}
\I \partial _t u^\e =  -\frac{\e}{2}\Delta u^\e +
\frac{1}{\e} \, V_{\Gamma}\left(\frac{x}{\e}\right)u^\e + 
\mathcal N (u^\e),\quad \text{where} \ \mathcal N(z)=\kappa\, |z|^{2}z,
\end{equation*} 
and applying Duhamel's formula we get 
\begin{align*}
w^\e(t) = & \ U^\e(t) w^\e(0) + {\e^{-\beta}}\int_0^t\, U^\e(t-\tau) 
\left( \mathcal N (u_N^\e(\tau) + \e^{\beta } w^\e(\tau) ) - 
\mathcal N(u_N^\e(\tau))\right)\, \D \tau \\
& \ - {\e^{-(\beta+1)}} \int_0^t  \, U^\e(t-\tau) \, \text{res}(u^\e_N(\tau)) 
\, \D \tau, 
\end{align*}
where $\text{res} (u_N^\e)$ is defined in equation \eqref{res}. 
Hence, using Lemma \ref{lem2-neu} and Lemma \ref{lem3} to estimate the
residual and the linear semi-group, respectively, we obtain 
\begin{equation*}
\varrho^\e(t)\leq \ C_l c  + C_l C_r  \e^{N-\beta} t
+ \, 
{\e^{-\beta}} 
\int_0^t \, C_l 
{\left \| \, \mathcal N (u_N^\e(\tau) + \e^{\beta } w^\e(\tau) ) - 
\mathcal N(u_N^\e(\tau))  \right \|}_{H^s_\e} \, \D \tau  ,
\end{equation*}
since $\varrho^\e(0)\leq c$, by assumption. Using $N\ge \beta$ and 
$\e\in(0,1)$, we consequently obtain 
\begin{equation*}
\varrho^\e(t)\leq \ C_l (c  + C_r t_*) 
+ 
\e^{-\beta}
\int_0^t  \, C_l 
{\left \| \, \mathcal N (u_N^\e(\tau) + 
\e^\beta 
w^\e(\tau) ) - 
\mathcal N(u_N^\e(\tau)) \right  \|}_{H^s_\e} \,\D \tau ,
\end{equation*}
for $t\le t_*$. 

Now, we set $C:=C_l (c + C_r t_*){\mathrm e}^{ C_l C_s t_*}$ and choose a 
$D>\max\{c, C\}$. Then, since $D>c \geq \varrho^\e(0)$ 
and $\varrho^\e(t)$ is continuous, there exists, for every 
$\e\in(0,1)$, a positive time $t^\e_{D}>0$, such that $\varrho^\e(t)\le D$ for $t\le t^\e_{D}$.

The Gagliardo--Nirenberg inequality \eqref{GN} yields, for $s>d/2$, that   
\[
{\| \, \e^{\beta} w^\e (t) \, \|}_{L^\infty} \leq \e^{\beta-d/2} C_\infty D
\qquad\text{for $t\le t^\e_{D}$}.
\]
Hence, using $\beta-d/2>0$ there exists an $\e_0\in(0,1)$, such
that  
\[
{\| \, \e^{\beta } w^\e (t) \, \|}_{L^\infty} \leq C_a, \quad \text{for $\e\in(0,\e_0]$ and $t\le t^\e_{D}$}.
\]
Moreover, by Lemma \ref{lem2-neu} we have 
${\left\| (\e\d)^\alpha u_N^\e (t)\, \right\|}_{L^\infty} \leq C_a$ 
for $|\alpha|\le s$, $\e\in(0,1)$, and $t<T$.
Thus, we can apply Lemma \ref{lem4} (with $R=C_a$) in order to estimate 
the nonlinear term and obtain
\begin{equation*}
\varrho^\e(t) \leq \ C_l( c  + C_r t_*)
+ C_l C_s 
\int_0^t  \,\varrho^\e(\tau) \, \D \tau,
\quad \text{for $\e\in(0,\e_0]$ and $t\le t^\e_{D}$}.
\end{equation*}
Gronwall's lemma then yields
\begin{equation*}
\varrho^\e(t) \leq \ C_l( c  + C_r t_*) {\mathrm e}^{ C_l C_s t}
\leq C, \quad \text{for $\e\in(0,\e_0]$ and 
$t\le t_*$}.
\end{equation*}
Since $C<D$, we conclude that the assumptions needed in order to apply 
Lemma \ref{lem4} are fulfilled for all $\e\in(0,\e_0]$ and all $t\le t_*$, 
that is 
$t^\e_{D}\ge t_*$.
Thus the above given estimate proves that 
\[
{\| \, u^\e (t,\cdot) - u_{N}^\e (t,\cdot) \, \|}_{H_\e^s}
={\mathcal O}(\e^{\beta}), 
\quad \text{for $t\in[0,t_*]$}.
\]
However, since 
$N\ge\beta$ and
\[
{\| \, u_N^\e (t,\cdot) - u_{N-1}^\e (t,\cdot) \, \|}_{H_\e^s}
={\mathcal O}(\e^{N}),\quad  \text{for $t \in [0,t_*]$}, 
\]
we can finally replace $u_N^\e (t,\cdot)$ by 
$u_{N-1}^\e (t,\cdot)$ in our stability result, which 
consequently proves the assertion of the theorem.
\end{proof}
We want to stress that our stability result is an advancement when compared to 
the result of \cite{CMS}, in the sense that our asymptotic estimates 
no longer suffer from a loss of accuracy in powers of $\e$ (as has been 
the case in the cited work). We infer in particular, that our nonlinear setting 
allows for the same kind of stability result as one would expect in the linear 
case.
 
Theorem \ref{th1} also shows that no blow-up can occur 
in the exact solution $u^\e$ within the interval $t \in [0, T)$, determined 
by the existence time of \eqref{systemIntro}. Hence, if the system
\eqref{systemIntro} indeed admits global-in-time solutions we deduce that the 
solutions $u^\e$, starting close to such modulated pulses, exist for arbitrary 
long times (but blow-up may occur after the modulational structure is lost 
on longer time scales.)

Finally note that for $d=1$ the assumption $N> d/2$ 
and the required closure of order $\Lambda=2N+1$ imply that, at least, 
$N=1$ and thus $\Lambda = 3$, whereas for $d=3$ spatial dimensions 
we need at least $N=2$ and hence $\Lambda = 5$.
This increase of the required $\Lambda$ for higher 
spatial dimensions $d$ can be relaxed though, as we shall show in the following 
section.

\section{The case of higher-order resonances}
\label{highres}

In Section \ref{formal}, we derived the approximate solution $u^\e_N$ 
under the assumption that $\mathcal S$ satisfies a closure condition of 
sufficient high order $\Lambda$, \ie we required $\Lambda = 2N+1$ and $N>d/2$. 
We shall show now that this closure condition can be significantly relaxed
if we generalize the ansatz \eqref{wkb1}, \eqref{wkb2} slightly by allowing
for a larger set of modes $\mathcal G_{2N+1}$. To this end we consider the new ansatz
\begin{equation}
\label{e:ansatz}
u_N^\e (t,x)= \sum_{n=0}^N \sum_{\sigma \in \mathcal G_{2n+1}} 
\e^n  \, A_{n, \sigma}(t,x,y) \be_{\sigma}\left(\tau,y\right),
\end{equation}
where the set of modes $\mathcal G_{2N+1}$ is then defined inductively as follows: 
Again we start from $\mathcal
S=\{(k_1,\ell_1),...,(k_M, \ell_M)\}$, which is now assumed to be \emph{only} closed of order
$\Lambda =3$ and we set $\mathcal G_1=\mathcal G^{(1)}_\mathcal S$. 
However, for $n\geq 2$ we will allow the sets $\mathcal G_{2n+1}$ to be larger than
$\mathcal G^{(2n+1)}_\mathcal S$. The reason for this enlargement is motivated by the
fact, that 
\[
\widetilde {\mathcal G}^{(2n+1)}_{\mathcal S} = \mathcal G^{(2n+1)}_{\mathcal S}\cap 
\Big( \mathcal G \setminus \mathcal G^{(1)}_{\mathcal S} \Big)
\]
may be nonempty for some $n\geq 2$. Then, the corresponding equation
\eqref{allg} might not be solvable. To circumvent this problem, we set 
$$
\mathcal G_{2n-1}=\mathcal G^{(2n-1)}_{\mathcal S} \cup \widetilde {\mathcal G}_{\mathcal S}^{(2n+1)}.
$$ 
At stage $n{-}1$ the corresponding $A_{n-1,\sigma}$ with $\sigma \in 
\widetilde {\mathcal G}^{(2n+1)}_{\mathcal S}$ take the usual product form $a_{n-1,\sigma}(t,x)\chi_\sigma(y)$ with
$\chi_\sigma  \in \mathrm{ker}L^\sigma_0 \subset H^2(\YYY)$ and free
coefficients $a_{n-1,\sigma}$.  Then, in step $n$ the
corresponding equations for $\sigma\in \widetilde {\mathcal G}_{\mathcal S}^{(2n+1)}$ can be solved as
in Case II of Subsection \ref{ss3.1}, \ie we obtain a linear transport equation for the free
coefficients from the solvability condition for the previous step. 

To make this more precise, we associate with the sequence
$\mathcal G_1,\mathcal G_3,\ldots, \mathcal G_{2N+1}$  another sequence $\check {\mathcal G}_3,\check {\mathcal G}_5,\ldots, \check {\mathcal G}_{2N+1}$ as follows
\begin{equation}\label{e:assoc-seq}
\check {\mathcal G}_{2n+1} = \bigcup_{n_1,n_2,n_3=0,...,n-1 \atop
 n_1+n_2+n_3=n-1} \Big( \mathcal G_{2n_1+1} -\mathcal G_{2n_2+1} +\mathcal G_{2n_3+1}\Big) .
\end{equation}
Hence, $\check {\mathcal G}_{2n+1}$ contains all modes of order $\O(\e^n)$ 
that can be generated by the cubic nonlinearity $|u|^2 u$ from the already
given terms of lower order. 

\begin{definition}\label{d:weak-res}
A mode set  $\mathcal S=\{(k_1,\ell_1),...,(k_M, \ell_M)\}$ is called
\emph{weakly closed of order $2N{+}1$}, 
if there exists a sequence ${\mathcal G}_1\subset
{\mathcal G}_3 \subset \cdots \subset {\mathcal G}_{2N+1} \subset \mathcal B\times \R$ such that the
following conditions hold:
\begin{equation*}
\begin{aligned}
\text{(i) }\ & {\mathcal G}_{2n+1} \text{ is finite for }n=0,1,...,N; \\
\text{(ii) }\ & {\mathcal G}_1 ={\mathcal G}^{(1)}_\mathcal S=\Sigma(\mathcal S); \\
\text{(iii) }\ & \check {\mathcal G}_{2n+1} \subset {\mathcal G}_{2n+1} \text{ for } n=1,...,N; \\
\text{(iv) }\ &\text{if } \sigma \in {\mathcal G}_{2n+1}\cap {\mathcal G} \text{ with }n\in \N, 
  \text{ then either }  \sigma \in {\mathcal G}_{2n-1} \text{ or } 
   \sigma \not\in \check {\mathcal G}_{2n+1};\\
\text{ (v) }\ & \text{for each }\sigma
   \in {\mathcal G}_{2N+1}\cap {\mathcal G} \text{ the group
  velocity } \vartheta_\sigma =\nabla_k E_\ell(k) \text{ exists},
\end{aligned}
\end{equation*}
where $\check {\mathcal G}_3\subset \cdots \subset \check {\mathcal G}_{2N+1}$ is the sequence 
associated with ${\mathcal G}_1,...,{\mathcal G}_{2N+1}$ according to \eqref{e:assoc-seq}.  
\end{definition}

Condition (iv) means that any occurring resonant mode
must either occur already in an earlier step and hence has a corresponding
free coefficient or it appears for the first time but it is not yet generated
by the nonlinear interaction. This will become clearer in the following examples.

\begin{example}\label{ex:weak1}
We first show that a system which is closed of order $2N{+}1$ is also weakly
closed of order $2N{+}1$. For this we simply set
${\mathcal G}_{2n+1}={\mathcal G}^{(2n+1)}_\mathcal S$. By construction, the associated sequence is
$\check {\mathcal G}_{2n+1}={\mathcal G}_{2n+1}$ and the conditions (i) to (iii) hold
immediately. Moreover, (iv) follows since ${\mathcal G}_{2n+1}\cap {\mathcal G}= {\mathcal G}_1={\mathcal G}^{(1)}_\mathcal
S$.     
\end{example}

\begin{example}\label{ex:weak2}
We now show that a weak closure of order $\Lambda$ is in general a weaker condition than 
closure of order $\Lambda$ is. For this
consider the case $\mathcal S=\{(k_1,\ell_1),(k_2,\ell_2)\}$ such that
${\mathcal G}_1={\mathcal G}^{(1)}_\mathcal S=\{\sigma_1,\sigma_2 \}\subset {\mathcal G}$ with $\sigma_j =
(k_j,E_{\ell_j}(k_j)) $. An easy induction argument gives 
\[
{\mathcal G}^{(2n+1)}_\mathcal S=\{\: (n{+ }1{-}j)\sigma_1 +(j{-}n)\sigma_2 \: ; \: 
j=0,1,\ldots, 2n+1\:\}
\]
Now assume that ${\mathcal G}\cap {\mathcal G}^{(2 N^*+1)}_\mathcal S\subset \{ \sigma_1,\sigma_2 ,
3\sigma_1{-}2\sigma_2\} $ for some $N^*\geq 2$, \ie we have a closure of order 3
but not of order 5. 

We claim that a weak closure of order $2N{+}1$ still holds for any $N$ such that 
$3N \leq  2 N^*$. For $n=0,\ldots, N$ we let  
\[
{\mathcal G}_{2n+1} = \{\: j\sigma_1 +(1{-}j)\sigma_2 \: ; \: 
j=-\beta_n,\ldots, \alpha_n\:\},
\]
with $\alpha_n= [3(n{+}1)/2] $ and $\beta_n=[3n/2]$, where $[\,\cdot\,]$ denotes
the integer part. By construction of the sequences $(\alpha_n,\beta_n)_{n}$
we thus have ${\mathcal G}_3=\check {\mathcal G}_3\cup\{3\sigma_1{-}2\sigma_2\}$ and
${\mathcal G}_{2n+1}=\check {\mathcal G}_{2n+1}$ for $n=2,\ldots,N$. Hence conditions (i) to (iii)
of Definition \ref{d:weak-res} are fulfilled. Condition (iv) also holds 
since ${\mathcal G}_{2n+1}\cap {\mathcal G}\subset {\mathcal G}_1\cup\{3\sigma_1{-}2\sigma_2\}$.   
\end{example}

Repeating the construction of Section \ref{ss3.2} analogously we obtain 
the following result. 

\begin{lemma}\label{l:weak-ansatz}
Let $\mathcal S=\{(k_1,\ell_1),\ldots, (k_M,\ell_M)\}$ be a set of modes that
is closed of order $\Lambda=3$ and weakly closed of order $\Lambda=2N{+}1$ 
for some $N\in \N$. Then an approximation 
\begin{equation}\label{approxGeneral}
u^\e_N(t,x) = \sum_{n=0}^N \e^n \sum_{\sigma\in {\mathcal G}_{2n+1}} A_{n,\sigma}(t,x,y)
\be_\sigma 
\end{equation}
can be constructed as in Sections \ref{ss3.1} and \ref{ss3.2}, 
where ${\mathcal G}_1\subset \cdots {\mathcal G}_{2N+1}$ is the sequence guaranteed by Definition
\ref{d:weak-res}.  
\end{lemma}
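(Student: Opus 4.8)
The plan is to mimic the inductive construction of Sections \ref{ss3.1} and \ref{ss3.2} step by step, replacing $\mathcal G_\s^{(2n+1)}$ by the enlarged sets $\mathcal G_{2n+1}$ throughout, and to check that conditions (i)--(v) of Definition \ref{d:weak-res} are exactly what is needed to keep the construction going. First I would insert the ansatz \eqref{approxGeneral} into \eqref{nls}, expand in powers of $\e$, and re-derive the analogues of \eqref{x0}--\eqref{xn}: the only change is that the sum defining $X_n$ now ranges over $\sigma\in\mathcal G_{2n+1}$ for the $L_0$-term, $\sigma\in\mathcal G_{2n-1}$ for the $L_1$-term, $\sigma\in\mathcal G_{2n-3}$ for the $L_2$-term, and over triples $\sigma_j\in\mathcal G_{2n_j+1}$ with $\sigma_1-\sigma_2+\sigma_3=\sigma$ for the nonlinear term. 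Condition (iii), $\check{\mathcal G}_{2n+1}\subset\mathcal G_{2n+1}$, together with $\check{\mathcal G}_{2n-3}\subset\mathcal G_{2n-3}\subset\mathcal G_{2n-1}\subset\mathcal G_{2n+1}$ (using the chain of inclusions and \eqref{e:assoc-seq}), guarantees that every mode produced by $L_1$, $L_2$ or the cubic term on already-constructed coefficients actually lies in $\mathcal G_{2n+1}$, so that demanding $X_n\equiv0$ makes sense and yields equations of the form \eqref{allg}, $L_0^\sigma A_{n,\sigma}=F_{n,\sigma}$, for $\sigma\in\mathcal G_{2n+1}$.

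Next I would run the two-case analysis as before. For $\sigma\in\mathcal G_{2n+1}\setminus\mathcal G$, the operator $L_0^\sigma=E-H_\Gamma(k)$ is invertible (since $(k,E)\notin\mathcal G$ means $E$ is not an eigenvalue of $H_\Gamma(k)$), so $A_{n,\sigma}=(L_0^\sigma)^{-1}F_{n,\sigma}$ exactly as in \eqref{noninv}. For $\sigma\in\mathcal G_{2n+1}\cap\mathcal G$, $L_0^\sigma$ has nontrivial kernel $\mathrm{ker}L_0^\sigma=\mathrm{span}\,\chi_\sigma$, and \eqref{allg} is solvable iff the solvability condition $\langle\chi_\sigma,F_{n,\sigma}\rangle_{L^2(\YYY)}=0$ holds. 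Here condition (iv) is what saves us: if $\sigma\in\mathcal G_{2n+1}\cap\mathcal G$ then either $\sigma\in\mathcal G_{2n-1}$, in which case a free coefficient $a_{n-1,\sigma}$ was already introduced at the previous stage and the solvability condition becomes a (linear, inhomogeneous — or for $n=1$ nonlinear) transport equation determining it, exactly as \eqref{trans1} and \eqref{intrans}; or $\sigma\notin\check{\mathcal G}_{2n+1}$, meaning that the nonlinear term contributes nothing to $F_{n,\sigma}$ for this $\sigma$ (no triple from lower-order data hits $\sigma$), so that $F_{n,\sigma}$ only involves $L_1^\sigma A_{n-1,\sigma}$ and $L_2A_{n-2,\sigma}$, which vanish since $A_{n-1,\sigma}$ and $A_{n-2,\sigma}$ are zero (this is the first appearance of $\sigma$), and then $\sigma$ is a genuinely new resonant mode for which $A_{n,\sigma}=a_{n,\sigma}(t,x)\chi_\sigma(y)$ with a fresh free coefficient, to be pinned down at the next stage. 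In either subcase the solvability condition is met and we obtain $A_{n,\sigma}=a_{n,\sigma}(t,x)\chi_\sigma(y)+(L_0^\sigma)^{-1}(1-\mathbb P_\sigma)F_{n,\sigma}$ as in \eqref{ampli}, \eqref{perp}. Condition (v) ensures that the group velocities $\vartheta_\sigma$ appearing in all these transport equations are well-defined, so that Lemma \ref{lem1}-type local existence applies to each of them. The free coefficients at the final stage $n=N$ can be set to zero, since they would only be determined at stage $N+1$, which we do not need; likewise all newly introduced $a_{n,\sigma}(0,\cdot)$ can be put to $0$.

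I expect the main obstacle — or rather the main bookkeeping burden — to be verifying carefully that condition (iv) really does dichotomize the resonant modes as claimed, i.e. that when $\sigma\in(\mathcal G_{2n+1}\setminus\mathcal G_{2n-1})\cap\mathcal G$ one genuinely has $F_{n,\sigma}=0$ so that the trivial solution $A_{n,\sigma}=a_{n,\sigma}\chi_\sigma$ is consistent. This requires tracking that $\sigma\notin\check{\mathcal G}_{2n+1}$ kills the cubic contribution and simultaneously that $A_{m,\sigma}\equiv0$ for $m<n$ kills the $L_1$ and $L_2$ contributions — the two facts must hold together, and they do precisely because $\sigma$ appearing for the first time at order $2n+1$ forces both. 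Once this is in place, the induction closes and the construction of \eqref{approxGeneral} goes through verbatim along the lines of Subsections \ref{ss3.1}--\ref{ss3.2}; the regularity bookkeeping (the analogue of \eqref{Ansigma}) is unchanged since it only uses the product/inverse structure of the $A_{n,\sigma}$ and the smoothness of the $\chi_\sigma$, not the particular index sets. I would close by remarking that Example \ref{ex:weak1} shows this genuinely generalizes Section \ref{formal}.
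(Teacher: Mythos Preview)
Your proposal is correct and is precisely the detailed execution of what the paper means by ``repeating the construction of Section \ref{ss3.2} analogously'': the paper gives no proof beyond that phrase and the motivating discussion preceding the lemma, and your careful two-case / two-subcase analysis (using (iii) to capture all generated modes, (iv) to dichotomize resonant $\sigma$ into ``already has a free coefficient'' versus ``$F_{n,\sigma}=0$ so introduce a fresh one'', and (v) for the transport equations) is exactly the intended argument. Your identification of the key bookkeeping point --- that $\sigma\in(\mathcal G_{2n+1}\setminus\mathcal G_{2n-1})\cap\mathcal G$ together with $\sigma\notin\check{\mathcal G}_{2n+1}$ forces $F_{n,\sigma}=0$ --- is the crux that the paper leaves implicit.
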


Hence, via the same steps as in Section \ref{just} 
one can easily obtain a justification of the amplitude equations \eqref{systemIntro} 
under these relaxed resonance conditions.
\begin{corollary}
Let $d,N\in\N$ and ${\mathcal S}$ be a mode system that is closed of order 
$\Lambda=3$ and weakly closed of order $\Lambda=2N+1$.
Then, under the same assumptions as before the statement of Theorem \ref{th1} 
holds analogously with the approximate solution given by \eqref{approxGeneral}.
\end{corollary}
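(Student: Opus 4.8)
The plan is to follow verbatim the scheme already developed for the strongly closed case, replacing the mode graphs $\mathcal G^{(2n+1)}_\mathcal S$ by the enlarged sequence ${\mathcal G}_1\subset\cdots\subset{\mathcal G}_{2N+1}$ supplied by Definition \ref{d:weak-res}, and then invoking Lemma \ref{l:weak-ansatz} to assert that the construction of Sections \ref{ss3.1}--\ref{ss3.2} goes through in this setting. Concretely, one plugs the ansatz \eqref{approxGeneral} into \eqref{nls}, expands in powers of $\e$, and again collects the coefficient of each exponential $\be_\sigma$; this produces equations of the form $L_0^\sigma A_{n,\sigma}=F_{n,\sigma}$ exactly as in \eqref{allg}, where $F_{n,\sigma}$ depends only on the lower-order data $(A_{m,\sigma})_{m<n}$ and on the cubic interaction terms collected in $\check{\mathcal G}_{2n+1}$. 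The point of conditions (iii) and (iv) is precisely that every $\sigma$ for which $F_{n,\sigma}$ is nonzero lies in ${\mathcal G}_{2n+1}$, and that any such $\sigma\in{\mathcal G}$ either already carried a free coefficient $a_{m,\sigma}$ at an earlier stage (so solvability is arranged by fixing that coefficient via the transport equation analogous to \eqref{intrans}) or is not yet generated by the nonlinearity, so that $F_{n,\sigma}=0$ and the equation is trivially solvable with $A_{n,\sigma}$ again of product form. Condition (v) guarantees that all transport equations appearing have constant group velocities $\vartheta_\sigma$, so the linear existence theory of Lemma \ref{lem1} (and its inhomogeneous variant used for $n\ge2$) applies unchanged.

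Next I would reproduce the a-priori estimates of Lemma \ref{lem2-neu}. Here one checks that the only structural ingredients used in its proof are: (a) the regularity statement \eqref{Ansigma}, $A_{n,\sigma}\in C^0([0,T),H^{S-n}(\R^d,H^{K+2}(\YYY)))$, which is obtained inductively from the formulas \eqref{noninv}, \eqref{perp}, \eqref{ampli} together with the smoothing property $(L_0^\sigma)^{-1}\in\mathrm{Lin}(L^2(\YYY),H^2(\YYY))$ on the non-resonant modes and the regularity of $\chi_\sigma$ on the resonant ones; and (b) the bound on the residual, which now consists of the terms $\e^n X_n$ for $N{+}1\le n\le 3N{+}1$ built exactly as in \eqref{xn} but with ${\mathcal G}_{2n+1}$ in place of $\mathcal G^{(2n+1)}_\mathcal S$. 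Since ${\mathcal G}_{2N+1}$ is finite (condition (i)) and $\check{\mathcal G}_{2n+1}\subset{\mathcal G}_{2n+1}$ (condition (iii)), all these sums are finite and the residual is again $\O(\e^{N+1})$ in $H^s_\e$ under the same conditions on $S$, $K$. The estimate on $U^\e(t)$ from Lemma \ref{lem3} does not involve the mode structure at all and is used as is.

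Finally, the nonlinear stability argument of Theorem \ref{th1} is copied line for line: write $u^\e=u_N^\e+\e^\beta w^\e$, apply Duhamel, estimate the residual by Lemma \ref{lem2-neu}, the propagator by Lemma \ref{lem3}, and the nonlinear difference $\mathcal N(u_N^\e+\e^\beta w^\e)-\mathcal N(u_N^\e)$ by the Moser-type Lemma \ref{lem4}, then close the estimate with a continuity/bootstrap argument and Gronwall. Nowhere in that chain does the precise form of the mode graph enter --- only the three conclusions of Lemma \ref{lem2-neu} and the fact that $u_N^\e$ is a genuine approximate solution with residual $\O(\e^{N+1})$. Hence the proof of the corollary is simply: apply Lemma \ref{l:weak-ansatz} to build $u_N^\e$; verify that the proofs of Lemma \ref{lem2-neu} and Theorem \ref{th1} use the weak-closure hypothesis only through the already-established properties (finiteness of ${\mathcal G}_{2n+1}$, solvability at each stage, existence of $\vartheta_\sigma$); conclude the same error estimate.

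The main obstacle --- and really the only nonroutine point --- is the bookkeeping hidden inside Lemma \ref{l:weak-ansatz}, namely verifying that the hierarchy of solvability conditions is consistent: when a mode $\sigma$ enters ${\mathcal G}_{2n-1}$ ``prematurely'' (as an element of $\widetilde{\mathcal G}^{(2n+1)}_{\mathcal S}$) with a free coefficient $a_{n-1,\sigma}$, one must be sure that the solvability condition at step $n$ pins down exactly this coefficient and no conflict arises with a coefficient that was already determined, and that condition (iv) indeed prevents any genuinely unsolvable $L_0^\sigma A_{n,\sigma}=F_{n,\sigma}$ with $F_{n,\sigma}\ne0$ and $L_0^\sigma$ non-invertible from occurring. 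Once one trusts Definition \ref{d:weak-res} and Lemma \ref{l:weak-ansatz} to have packaged this correctly --- which the examples \ref{ex:weak1} and \ref{ex:weak2} make plausible --- the corollary follows with no new analysis, and I would state it as an immediate consequence, flagging only that the exponents $S$, $K$, $N$ and the time $t_*$ play the same roles as in Theorem \ref{th1}.
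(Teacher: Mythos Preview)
Your proposal is correct and follows exactly the approach of the paper, which in fact does not give a separate proof of the corollary at all but merely states that ``via the same steps as in Section \ref{just} one can easily obtain a justification of the amplitude equations \eqref{systemIntro} under these relaxed resonance conditions.'' Your write-up spells out in detail precisely those same steps --- construction via Lemma \ref{l:weak-ansatz}, transfer of the a-priori estimates of Lemma \ref{lem2-neu}, and the unchanged stability argument of Theorem \ref{th1} --- so there is nothing to add.
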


\appendix

\section{Hamiltonian structure and conservation laws  
of the amplitude equations}

Recalling the amplitude equations in the general form
\eqref{systemIntro}, \ie  
\begin{equation}
\label{e4:system}
\i \partial_t a_m =-\i \vartheta_m\cdot \nabla_x a_m + \sum^M_{ {p,q,r =1:} \atop {\Sigma(\mu_p, \mu_q, \mu_r) =\Sigma(\mu_m)} }
\kappa_{(p,q,r,m)} a_p \ol a_q a_r, 
\end{equation}
for $m=1,\ldots,M$, we want to highlight the Hamiltonian structure of this
equation as well as to deduce the conserved quantities which
follow from the specific structure of the resonances. For a general method on
how to derive the \emph{reduced Hamiltonian structure} and the first
integrals for \eqref{e4:system} we refer to \cite{GHM,GiHeMi06?LHTS}.  Here we
just collect the results that are obtained with the general theory developed
there.

We first note that \eqref{e4:system} is a Hamiltonian system generated by 
\[
\mathcal H^\mathrm{red} (a) = \int_{\R^d} \sum_{m=1}^M \Im\left(\ol a_m \:
  \vartheta_m \cdot \nabla a_m \right) + \sum^M_{ {p,q,r =1:} \atop
  {\Sigma(\mu_p, \mu_q, \mu_r) =\Sigma(\mu_m)} }
\frac{\kappa_{(p,q,r,m)}}{2}\; a_p \ol a_q a_r \ol a_m \; \D x,
\]    
and the symplectic two-form $\i = \sqrt{-1}$. Hence, \eqref{e4:system} takes
the form
\[
\i \d_t a_m =\delta_{\ol a_m} \mathcal H^\mathrm{red}(a) .
\]
Note that the reduced Hamiltonian $\mathcal H^\mathrm{red} (a)$ 
is not obtained as the lowest order
expansion of the original Hamiltonian $\mathcal H^\e(u^\e)$ of the full system, which reads
\[
\mathcal H^\e (u^\e) = \int_{\R^d} \frac{\e^2}2 |\nabla u  |^2 +
 V_\Gamma\left(\frac{x}{\e}\right)|u^\e|^2 +\frac{\e \kappa}2 |u|^4 \;\D x  .
\] 
Indeed, inserting the ansatz $u^\e=u^\e_N$, as given in \eqref{uNfinal}, into $\mathcal H^\e(u^\e)$ 
we find, as $\e \to 0$, that
\[
\mathcal H^\e(u^\e_N)= \mathcal I(a)+ \O(\e), \quad \text{with }
\mathcal I(a)=\int_{\R^d} \sum_{m=1}^M E_{\ell_m}(k_m)
|a_m|^2 \:\D x .
\]
Even though not a Hamiltonian, 
$\mathcal I(a)$ clearly is a conserved quantity for the amplitude system
\eqref{e4:system}. 

However, the energy levels $E_{\ell_m}(k_m) \equiv \omega_m$ do not occur
explicitly in the amplitude system \eqref{e4:system}. Hence, any
choice of $\widetilde \omega_m \in \R$ that is compatible with the resonance
conditions \eqref{rcon} leads to additional conserved quantities. More
precisely, if $\widetilde \omega_1,\ldots, \widetilde \omega_M$ are chosen
such that for any $p,q,r,m\in \{1,\ldots,M\}$ the resonance conditions
\eqref{rcon} imply the identity
\[
\widetilde \omega_p- \widetilde \omega_q+ \widetilde \omega_r= \widetilde \omega_m,
\] 
then
\[
\widetilde{\mathcal I}(a)= \int_{\R^d} \sum_{m=1}^M \widetilde \omega_m |a_m|^2
\;\D x
\]
defines a first integral. 

A particular choice is $\widetilde \omega_1=\cdots= \widetilde \omega_M=1$, which leads to the trivial fact that the
$L^2$ norm is preserved. The latter can of course also be deduced from
the fact that the $L^2$ norm was preserved in the original problem for $u^\e$ or
from the fact that the mode system is invariant under the phase shifts 
$(a_1,\ldots,a_M) \mapsto (\E^{\i \alpha}a_1,\ldots,\E^{\i \alpha}a_M)$, with
$\alpha \in \R$.  
Similarly, $\widetilde{\mathcal I}(a)$ can be understood as
a first integral with respect to suitably chosen phase shifts that are
compatible with the resonance structure, \ie 
\[
\widetilde T_\alpha: (a_1,\ldots,a_M) \mapsto (\E^{\i \widetilde \omega_1
  \alpha}a_1,\ldots, \E^{\i \widetilde \omega_M \alpha}a_M),.
\]
Finally, it should be mentioned that the mode system \eqref{e4:system} is
translation invariant. This provides first integrals associated with the
translation operators in the coordinate directions 
\[
\mathcal I^\mathrm{trans}_\theta (a)= \int_{\R^d}\sum_{m=1}^M  \Im \left(\ol
  a_m\; \theta\cdot \nabla a_m  \right) \D x, \quad {\theta \in \R^d}.
\]

So far, we are not able to show that these conserved quantities are enough to
provide a global existence result. Note in particular, that for the original
problem the case $\kappa>0$ leads to an energy that is definite and allows us
to conclude global existence for the nonlinear Schr\"odinger equation
\eqref{nls}. For the mode system \eqref{e4:system} the sign of $\kappa$ is no
longer helpful, since the term involving derivatives is indefinite. Note also 
that global existence for the mode system cannot be inferred from global 
existence of \eqref{nls}, since we can not expect the solutions $u^\e$ to 
remain in the form of modulated pulses for all times. 

\bibliographystyle{amsplain}

\end{document}